\algnewcommand\algorithmicinput{\textbf{Input:}}
\algnewcommand\Input{\item[\algorithmicinput]}
\algnewcommand\algorithmicoutput{\textbf{Ouput:}}
\algnewcommand\Output{\item[\algorithmicoutput]}
\newtheorem{theorem}{Theorem}[section]
\newtheorem{proposition}[theorem]{Proposition}
\newcommand{\trace}{{\rm Trace}}
\newcommand{\D}{\mathrm{D}}
\newcommand{\Stiefel}[2]{{\mathrm{St}({#1},{#2})}}
\newcommand{\Gstiefel}[3]{{\mathrm{St}_{#3}({#1},{#2})}}
\newcommand{\OG}[1]{{\mathcal{O}({#1})}}
\newcommand{\GL}[1]{{\mathrm{GL}({#1})}}
\newcommand{\Sym}{{\mathrm{Sym}}}
\newcommand{\hess}{\mathrm{Hess}}
\newcommand{\mat}[1]{{\bf #1}}
\newcommand{\subject}{\mathrm{subject\  to}}
\newcommand{\PSD}[2]{\mathrm{S}_+({#1},{#2})}
\newcommand{\Grass}[2]{\mathrm{Gr}({#1},{#2})}
\newcommand{\grad}{\mathrm{grad}}
\newcommand{\argmin}{\operatornamewithlimits{arg\,min}}
\newcommand{\hlift}{{\rm horizontal\ lift\ of\ }}
\newcommand{\Fro}{{F}} 
\newcommand{\change}[1]{#1}
\newcommand{\changeBM}[1]{#1}
\newcommand{\changeBMM}[1]{#1}
\title{Riemannian preconditioning\thanks{This paper presents research results of the Belgian Network DYSCO (Dynamical Systems, Control, and Optimization), funded by the Interuniversity Attraction Poles Programme, initiated by the Belgian State, Science Policy Office. The scientific responsibility rests with its authors. This work was also partly supported by the Belgian FRFC (Fonds de la Recherche Fondamentale Collective).}}
\author{Bamdev Mishra\thanks{Amazon Development Centre India, Bangalore 560055, India (Bamdevm@amazon.com). This
author's research was supported as a research fellow of the Belgian National Fund for Scientific Research (FNRS). Most of the work was done while this author was with the Department of Electrical Engineering and Computer Science, University of Li\`ege, 4000 Li\`ege, Belgium, and was visiting the Department of Engineering (Control Group), University of Cambridge, Cambridge CB2 1PZ, UK.}
        \and Rodolphe Sepulchre\thanks{Department of Engineering, University of Cambridge, Cambridge CB2 1PZ, UK, and Depart- ment of Electrical Engineering and Computer Science, University of Li\`ege, 4000 Li\`ege, Belgium (R.Sepulchre@eng.cam.ac.uk).} 
        }
\date{Compiled on \today}
\begin{document}
\maketitle

\begin{abstract}
Thispaper exploits a basic connection between sequential quadratic programming and Riemannian gradient optimization to address the general question of selecting a \emph{metric} in Riemannian optimization, \change{in particular when the Riemannian structure is sought on a \emph{quotient} manifold}. The proposed method is shown to be particularly insightful and efficient in quadratic optimization with orthogonality and/or rank constraints, which covers most current applications of Riemannian optimization in matrix manifolds.
\end{abstract}

%



\section{Introduction}

Gradient algorithms are a method of choice for large-scale optimization, but their convergence properties critically depend on the choice of a suitable {\it metric}. Good metrics can lead to superlinear convergence whereas bad metrics can lead to very slow convergence. Goodness of the metric depends on its ability to encode second-order information about the optimization problem. For general optimization problems with equality constraints, sequential quadratic programming (SQP) methods provide an efficient selection procedure based on (approximating) the Hessian of a local quadratic approximation of the problem \citep[Chapter~18]{nocedal06a}. This approach is Lagrangian; that is, it lifts the constraint into the cost function. An alternative is to embed the constraint into the search space, leading to unconstrained optimization on a nonlinear search space. Selecting the metric then amounts to equipping the search space with a Riemannian structure \citep{smith94a, edelman98a, absil08a}. Riemannian optimization has gained popularity in the recent years because of the particular nature of the constraints that show up in many matrix or tensor applications, in particular \emph{orthogonality} and \emph{rank} constraints. Such constraints are \changeBMM{nonlinear} and \changeBMM{nonconvex}, but nevertheless very special \citep{edelman98a, mishra14a}. In particular, they are efficiently encoded through matrix factorizations and their underlying geometry makes the search space sufficiently structured to make the machinery of Riemannian optimization competitive with alternative approaches, including convex relaxations. A current limitation of Riemannian optimization is, however, in the choice of the metric. Previous work has mostly focused on the search space, exploiting the differential geometry of the constraint, but disregarding the role of the cost function. This limitation was pointed out early \citep{manton02a} and has been addressed in a number of recent contributions that emphasized the importance of preconditioning \citep{mishra14c, mishra12a, mishra14d, vandereycken10a, ngo12a}, but with no general procedure. The simple observation, and the main contribution, of the present paper is that SQP provides a systematic framework for choosing a metric in Riemannian optimization in a way that takes into consideration both the cost function and the constrained search space. This connection seems novel and insightful, \change{especially in the situation where the unconstrained optimization problem is formulated on a quotient manifold, leading to a situation where the Hessian of the Lagrangian is singular in the total space. Most notably, this situation covers optimization problems on the Grassmann manifold and on the manifold of  matrices of fixed rank.}

This paper advocates that the use of SQP to select a metric in Riemannian optimization is  general and connects two rather independent areas of constrained optimization. We focus in particular on the special case of quadratic cost functions with orthogonality and/or rank constraints. This particular situation encompasses a great deal of current successful applications of Riemannian optimization, including the popular generalized eigenvalue problem \citep{edelman98a, absil02a} and linear matrix equation problems \citep{benner13a, vandereycken10a}. Even in these highly researched problems, we show that SQP methods unify a number of recent disparate results and provide novel metrics. In the eigenvalue problem, where both the cost and constraints are quadratic, the SQP method suggests a parameterized family of Riemannian metrics that provides novel insight on the role of shifts in the power, inverse, and Rayleigh quotient iteration methods. In the problem of solving linear matrix equations, low-rank matrix factorizations make the cost function quadratic in each of the factors, leading to Riemannian metrics rooted in block-diagonal approximations of the Hessian. In all of the mentioned applications, we stress the complementary, but not equivalent, role of SQP and Riemannian optimization: the SQP method provides a \change{framework for selecting the metric from the (degenerate) Lagrangian in the total space} while the Riemannian framework provides the necessary generalization of unconstrained optimization to quotient manifolds, allowing for rigorous design and convergence analysis of a broad class of quasi-Newton algorithms in optimization problems over {\it classes of equivalences} of matrices.

We view this approach of selecting a metric from the Lagrangian as a form of \emph{Riemannian preconditioning}. Similar to the notion of preconditioning in the unconstrained case \citep[Chapter~5]{nocedal06a}, the chosen Riemannian metrics have a preconditioning effect on optimization algorithms.

This paper does not aim at a comprehensive treatment of the topic, but rather focuses on connections between several classical branches of matrix calculus: matrix factorizations and shifts in numerical linear algebra, Riemannian submersions in differential geometry, and SQP in constrained optimization. The connection between SQP and the Riemannian Newton method on submanifolds is shown in Section \ref{sec:connection_SQP_Newton}. The general idea of using the Lagrangian to select a metric in Riemannian optimization for constraints with symmetries is presented in Section \ref{sec:ingredients}. We discuss the choice of the metric depending on the curvature properties of both the cost and the constraint and the interpretation of the Lagrange parameter as a shift. Section \ref{sec:quadratic_orthogonality} develops the specific situation of quadratic cost and orthogonality constraints, revisiting the classical generalized eigenvalue problem. Section \ref{sec:quadratic_lowrank} further develops the specific situation of quadratic cost and rank constraints, with applications to solving large-scale matrix Lyapunov equations. All numerical illustrations use the Matlab toolbox Manopt \citep{boumal14a}.


\section{SQP as Riemannian Newton method}\label{sec:connection_SQP_Newton}

\begin{figure}[t]
\centering
\includegraphics[scale = 0.55]{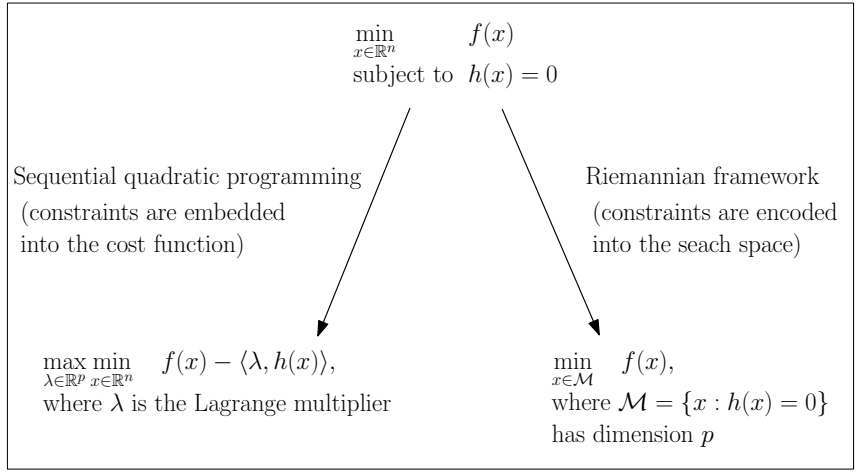}
\caption{Two complementary viewpoints on optimization with equality constraints.}
\label{fig:sqp_manifold}
\end{figure}

Consider the optimization problem
\begin{equation}\label{eq:optimization_problem}
\begin{array}{lll}
\min\limits_{x \in \mathbb{R}^n} & f (x) \\
\subject & h(x) = 0,
\end{array}
\end{equation}
where $f:\mathbb{R}^n\rightarrow \mathbb{R}$ and $h: \mathbb{R}^n \rightarrow \mathbb{R}^p$ are smooth functions. \change{To illustrate the connection between SQP and Riemannian optimization, we first consider the simplified situation in which} the set $\mathcal{M} := \{x : h(x) =0 \}$ has the structure of an \emph{embedded differentiable submanifold} of $\mathbb{R}^n$ \citep[Section~3.3]{absil08a}, i.e., \change{$h_x(x)$ is full rank everywhere, where $h_x(x)$ is the first-order derivative of $h$ at $x$ and is represented as an $n\times p$ matrix}.

\change{We recall} two complementary approaches for the problem (\ref{eq:optimization_problem}), namely the sequential quadratic approach and the Riemannian approach. The schematic view is shown in Figure \ref{fig:sqp_manifold}. We aim at connecting these two approaches in order to tune the metric on the search space in such a way that it incorporates \emph{second-order} information of the problem.

\subsection{The constrained optimization viewpoint (SQP)}\label{sec:sqp}
SQP is a particularly well known approach for equality constrained nonlinear optimization \citep[Chapter~18]{nocedal06a}. The core idea is to deal with the unconstrained \emph{Lagrangian} $ {\mathcal{L}}: \mathbb{R}^{n} \times \mathbb{R}^p \rightarrow \mathbb{R}: ( {x},  {\lambda}) \mapsto  {\mathcal{L}}( {x},  {\lambda})$ with two sets of parameters, defined as
\begin{equation}\label{eq:Lagrangian}
 {\mathcal{L}}( {x},  {\lambda} ) =  {f}(  x) - \langle  {\lambda},  {h}(  x) \rangle,
\end{equation} 
where $\langle \cdot, \cdot \rangle$ is the Euclidean inner product and $ {\lambda} \in \mathbb{R}^p$ is the \emph{Lagrange multiplier}. With the introduction of $\lambda$, the optimization problem (\ref{eq:optimization_problem}) translates to the problem
\begin{equation}\label{eq:maxmin_Lagrangian}
\max\limits_{\lambda \in \mathbb{R}^p} \min\limits_{x\in \mathbb{R}^n} \quad \mathcal{L}(x, \lambda),
\end{equation}
which leads to a \emph{primal-dual} iterative algorithm in $(x, \lambda) \in \mathbb{R}^n \times \mathbb{R}^p$, but the linearity in $\lambda$ is exploited to reduce the number of variables. For example, in the neighborhood of a local minimum, the best \change{\emph{least-squares estimate}} of the Lagrange multiplier is 
\begin{equation}\label{eq:least_square_lambda}
\change{{\lambda}_x= (  {h}_x(  x)^T\allowbreak  ( {h}_x(  x)) )^{-1} \allowbreak {h}_x(  x) ^T {f} _x(  x),}
\end{equation}
where $ {h}_x(  x)$ and $f _x(x)$ are the first-order derivatives of the functions $ {h}$ and $f$ at $ {x}$, respectively \citep[Equation~(18.21)]{nocedal06a}. \change{We use the convention that $f _x(x)$ is a column vector of length $n$ and $h_x(x)$ is a full column rank matrix of size $n\times p$. The least-squares estimate of $\lambda$ corresponds to the least-squares solution of the \emph{optimality condition} $h_x(x) \lambda = f_x(x)$. It should be noted that any other estimate for $h_x(x) \lambda = f_x(x)$ is equally valid, provided that the equality is satisfied at optimality.} 

Substituting $\lambda$ with $\lambda_x$ transforms the primal-dual iteration in $( {x}, {\lambda})$ to a purely primal iteration in the variable ${x}$ alone \citep[Page~539]{nocedal06a}. Once the Lagrange multiplier is estimated, the SQP optimization approach proceeds by minimizing the quadratic programming problem
\begin{equation}\label{eq:sqp}
\begin{array}{lll}
\arg\min\limits_{{  \zeta}_{  x} \in \mathbb{R}^n} &  {f}(  x) + \langle  {f}_x (  x),  {\zeta}_{  x} \rangle + \frac{1}{2}\langle  {\zeta}_{  x}, \D^2 {\mathcal{L}}( {x},  {\lambda}_{  x}) [ {\zeta}_{  x}]\rangle \\
\subject & {\rm D}  {h}(  x)[ {\zeta}_{  x}] = 0
\end{array}
\end{equation}
at each iteration, \change{where $x$ is assumed to be in the constraint set (i.e., it is a feasible point)}, $ {f}_x(  x)$ is the first-order derivative of the cost function $f$, \change{$ \D^2 {\mathcal{L}}( {x},  {\lambda}_{  x}) [ {\zeta}_{  x}]$} is the second-order derivative of $\mathcal{L}(x, \lambda_x)$ with respect to $x$ (keeping $ {\lambda}_{  x}$ fixed) that is applied in the direction $\zeta_x \in \mathbb{R}^n$, $\langle \cdot, \cdot \rangle$ is the Euclidean inner product, and ${\rm D} {h}(  x)[{  \zeta}_{  x}]$ is the Euclidean directional derivative of $ {h}(  x)$ in the direction $ {\zeta}_{  x} \in \mathbb{R}^n$, i.e., ${\rm D}  {h}(x)[ {\zeta}_{  x}] = \lim_{t \rightarrow 0} ( {h} ({ {x} + t  {\zeta}_{  x} }) -  h ({  x}))/t$.

If the quantity $ \langle  {\zeta}_{  x}, \D^2 {\mathcal{L}}( {x},  {\lambda}_{  x}) [ {\zeta}_{  x}]\rangle$ is strictly positive in the tangent space of constraints, then the problem (\ref{eq:sqp}) has a \emph{unique solution} \citep[Section~18.1]{nocedal06a}. The solution $ {\zeta}^*_{  x}$ of (\ref{eq:sqp}) is interpreted as a \emph{search} direction. Following the search direction \changeBM{with a step-size $s$}, the next iterate $ {x}_+$ in the SQP algorithm is obtained by projecting $ {x} +  s{\zeta}^*_{  x}$ onto the constrained space to maintain strict feasibility of the iterates, where $ {\zeta}^*_{  x}$ is the solution to (\ref{eq:sqp}). \change{This specific SQP algorithm is also called the \emph{feasibly projected} SQP method as every iterate $x_+$ is feasible \citep{absil09b,wright04a}}.

The resulting iterative algorithm is shown in Table \ref{tab:sqp} and has the properties of a Newton algorithm (locally in the neighborhood of a minimum) with favorable convergence properties \citep{wright04a}. \change{The algorithm in Table \ref{tab:sqp} is the simplified version of the algorithm in \citep{wright04a}.} Additionally, the SQP approach is appealing for the simplicity of its formulation.



\begin{table}[t]
\begin{center} \small
\begin{tabular}{ |p{12cm}| }
\hline
\\
The SQP algorithm for 
\[
\begin{array}[t]{lll}
\min\limits_{x \in \mathbb{R}^n} & f(x)\\
\subject & h(x) = 0.
\end{array}
\]
\begin{enumerate}
\item Compute the search direction ${  \zeta}_{  x}^*$ that is the solution of (\ref{eq:sqp}).
\item The next iterate $ {x}_+$ is obtained by projecting $ {x} +  s{\zeta}^*_{  x}$ onto the constrained space, where the step-size $s$ is obtained with backtracking line search.
\item Repeat until convergence.
\end{enumerate}
  \\
 \hline
\end{tabular}
\end{center} 
\caption{The SQP algorithm.}
\label{tab:sqp} 
\end{table}

\subsection{Connecting SQP to Riemannian Newton method}
An alternative treatment of problem (\ref{eq:optimization_problem}) is to recast it as an unconstrained optimization problem on a nonlinear search space that encodes the constraint. For special constraints that are sufficiently structured, the framework leads to an efficient computational framework \citep{absil08a, edelman98a, smith94a}. Particularly, concrete numerical algorithms, both first-order and second-order, are developed by endowing the set $\mathcal{M} :=\{x : h(x) =0 \}$ with a \emph{Riemannian structure}.

The connection between SQP and the Riemannian framework for constraints that are embedded in $\mathbb{R}^n$ (i.e., the manifold $\mathcal{M}$ has the structure of an embedded submanifold) has been studied in \citep{gabay82a}, \citep[Section~4.9]{edelman98a}, and more recently in \citep[Section~4]{absil09b}. In particular, the following two results are relevant for the present paper.

\change{
\begin{proposition}\label{prop:connecting_SQP_Newton}
(\citep[Proposition~4.1]{absil09b}): \changeBM{If} $\mathcal{M}$ is an embedded submanifold of $\mathbb{R}^n$, then the SQP algorithm in Table \ref{tab:sqp} is \changeBM{equivalent to} the Riemannian Newton method for the choice of the Lagrange multiplier $\lambda$ in (\ref{eq:least_square_lambda}).
\end{proposition}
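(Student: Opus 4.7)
The plan is to show that the quadratic program (\ref{eq:sqp}) defining the SQP search direction, under the prescribed choice $\lambda=\lambda_x$ of (\ref{eq:least_square_lambda}), coincides with the Riemannian Newton equation for $f$ on $\mathcal{M}$ equipped with the Riemannian metric inherited from the ambient Euclidean inner product. I would work tangentially and compare the two first-order optimality systems, then observe that the SQP projection step furnishes a retraction on $\mathcal{M}$.

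First I would identify the feasible set of (\ref{eq:sqp}). Since $\mathcal{M}$ is embedded and $h_x(x)$ has full column rank, $T_x\mathcal{M} = \ker h_x(x)^T = \{\zeta \in \mathbb{R}^n : \D h(x)[\zeta] = 0\}$, so the SQP linearized constraint is exactly the tangent space at $x$. Writing the KKT conditions for (\ref{eq:sqp}), there is $\mu \in \mathbb{R}^p$ with
\[
f_x(x) + \D^2\mathcal{L}(x,\lambda_x)[\zeta_x^*] - h_x(x)\mu = 0, \qquad \D h(x)[\zeta_x^*] = 0.
\]
Denote by $P := I - h_x(x)(h_x(x)^T h_x(x))^{-1} h_x(x)^T$ the orthogonal projector onto $T_x\mathcal{M}$. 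Applying $P$ to the first equation annihilates the term $h_x(x)\mu$ and yields
\[
P f_x(x) + P\, \D^2\mathcal{L}(x,\lambda_x)[\zeta_x^*] = 0.
\]

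The central step, and the one I expect to be the main obstacle, is to recognize the two surviving terms as the Riemannian gradient and Riemannian Hessian of $f$ at $x$. For the induced metric on an embedded submanifold, $\grad f(x) = P f_x(x)$ is standard. The key identity to establish is
\[
\hess f(x)[\zeta] = P\, \D^2\mathcal{L}(x, \lambda_x)[\zeta] \quad \text{for every } \zeta \in T_x\mathcal{M}.
\]
The argument uses that the least-squares formula (\ref{eq:least_square_lambda}) is precisely chosen so that $f_x(x) - h_x(x)\lambda_x = P f_x(x) = \grad f(x)$ is tangential, whence the Riemannian Hessian (Levi-Civita connection of the induced metric) equals the tangential projection of the Euclidean directional derivative of this tangential vector field. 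Differentiating $h_x(x)\lambda_x$ in direction $\zeta \in T_x\mathcal{M}$ produces two pieces: $h_x(x)\,\D\lambda_x[\zeta]$, which lies in the range of $h_x(x)$ and is killed by $P$, and $\D h_x(x)[\zeta]\,\lambda_x = \D^2\langle \lambda_x, h\rangle(x)[\zeta]$ (with $\lambda_x$ held fixed), which together with $P f_{xx}(x)[\zeta]$ reassembles into $P\, \D^2\mathcal{L}(x, \lambda_x)[\zeta]$. The least-squares choice of $\lambda_x$ is thus not incidental: it is exactly what makes $\grad f$ globally well-defined as a tangent vector field and makes the projection of the ambient second derivative of $\mathcal{L}(\cdot, \lambda_x)$ coincide with the intrinsic second-order object.

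Combining these ingredients, the projected KKT system becomes $\grad f(x) + \hess f(x)[\zeta_x^*] = 0$ in $T_x\mathcal{M}$, which is precisely the Riemannian Newton equation. Finally, I would note that for an embedded submanifold the map $\zeta \mapsto \Pi_\mathcal{M}(x + \zeta)$ (orthogonal projection of a neighborhood of $x$ onto $\mathcal{M}$) defines a smooth retraction; hence a single SQP iteration — namely stepping by $s\zeta_x^*$ in $\mathbb{R}^n$ and projecting back to $\mathcal{M}$ — is one Riemannian Newton step with this retraction and step size $s$. This yields the claimed equivalence.
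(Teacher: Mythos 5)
Your argument is correct. The paper states Proposition \ref{prop:connecting_SQP_Newton} without proof, importing it from the cited reference, and your reconstruction follows exactly the standard route behind that citation: the linearized constraint in (\ref{eq:sqp}) is $T_x\mathcal{M}$, projecting the KKT system with $P = I - h_x(x)(h_x(x)^Th_x(x))^{-1}h_x(x)^T$ identifies $Pf_x(x)$ with $\grad f(x)$ and $P\,\D^2\mathcal{L}(x,\lambda_x)[\cdot]$ with $\hess f(x)[\cdot]$ on $T_x\mathcal{M}$ for the induced metric (the least-squares choice (\ref{eq:least_square_lambda}) being precisely what makes $f_x(x)-h_x(x)\lambda_x$ the tangential gradient field whose projected derivative is the Riemannian Hessian), and the feasibility projection is a retraction.
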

}

\change{
\begin{proposition}\label{prop:second_order_SQP_Newton}
(From the remark made by \citet[Remark~4.3]{gabay82a}): \changeBM{Assume} that $\mathcal{M}$ is an embedded submanifold of $\mathbb{R}^n$ and $f:\mathcal{M} \rightarrow \mathbb{R}$ is a smooth function with isolated minima on $\mathcal{M}$. If $x^* \in \mathcal{M}$ is a local minimum of $f: \mathcal{M} \rightarrow \mathbb{R}$ on $\mathcal{M}$, then the second-order derivative of the Lagrangian along $\zeta_{x^*} \in T_{x^*} \mathcal{M}$ (the tangent space of $\mathcal{M}$ at $x^*$), i.e., $\langle \zeta_{x^*}, \D^2\mathcal{L}_x(x^*, \lambda_{x^*})[\zeta_{x^*}] \rangle$, captures \changeBM{all} second-order information of the cost function $f$ on $\mathcal{M}$.
\end{proposition}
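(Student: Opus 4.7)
The plan is to show that the quadratic form $\zeta \mapsto \langle \zeta, \D^2\mathcal{L}_x(x^*,\lambda_{x^*})[\zeta]\rangle$ on $T_{x^*}\mathcal{M}$ coincides with the second derivative of $f$ along any smooth curve on $\mathcal{M}$ passing through $x^*$ with tangent $\zeta$. Since at a critical point this second derivative is intrinsic (it depends only on $\gamma(0)$ and $\dot\gamma(0)$ and is independent of the choice of Riemannian metric on $\mathcal{M}$), it constitutes the complete second-order information of $f|_\mathcal{M}$ at $x^*$; establishing the identity therefore proves the claim.

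First I would pick an arbitrary smooth curve $\gamma:(-\epsilon,\epsilon)\to \mathcal{M}$ with $\gamma(0)=x^*$ and $\dot\gamma(0)=\zeta_{x^*}$ and differentiate $f\circ\gamma$ twice at $t=0$, which by the chain rule yields
\[
\tfrac{d^2}{dt^2}(f\circ\gamma)(t)\big|_{t=0} = \langle f_x(x^*), \ddot\gamma(0)\rangle + \langle \zeta_{x^*}, f_{xx}(x^*)[\zeta_{x^*}]\rangle.
\]
The first-order optimality condition at $x^*$ gives $f_x(x^*) = h_x(x^*)\lambda_{x^*}$, where $\lambda_{x^*}$ is exactly the least-squares multiplier in (\ref{eq:least_square_lambda}) (the least-squares system is consistent at a critical point). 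Next, feasibility $h(\gamma(t))\equiv 0$ differentiated twice at $t=0$ produces the standard second-order constraint identity
\[
h_x(x^*)^T\ddot\gamma(0) + \D^2 h(x^*)[\zeta_{x^*},\zeta_{x^*}] = 0,
\]
where $\D^2 h(x^*)$ denotes the $\mathbb{R}^p$-valued second derivative of $h$. Substituting $\langle f_x(x^*),\ddot\gamma(0)\rangle = \langle \lambda_{x^*}, h_x(x^*)^T\ddot\gamma(0)\rangle = -\langle \lambda_{x^*}, \D^2 h(x^*)[\zeta_{x^*},\zeta_{x^*}]\rangle$ into the above and recalling the definition (\ref{eq:Lagrangian}) of $\mathcal{L}$ yields
\[
\tfrac{d^2}{dt^2}(f\circ\gamma)(t)\big|_{t=0} = \langle \zeta_{x^*}, f_{xx}(x^*)[\zeta_{x^*}]\rangle - \langle \lambda_{x^*}, \D^2 h(x^*)[\zeta_{x^*},\zeta_{x^*}]\rangle = \langle \zeta_{x^*}, \D^2\mathcal{L}_x(x^*,\lambda_{x^*})[\zeta_{x^*}]\rangle.
\]

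The main obstacle is not the above computation, which is routine, but rather justifying that this quadratic form really captures \emph{all} second-order information of $f|_\mathcal{M}$ at $x^*$. The cleanest justification is that at a critical point the Riemannian Hessian of $f|_\mathcal{M}$, viewed as a quadratic form on $T_{x^*}\mathcal{M}$, is independent of the choice of Riemannian metric and of the curve realizing the tangent vector; the right-hand side above is therefore forced to equal any such intrinsic second-order model. Alternatively, one may appeal to Proposition \ref{prop:connecting_SQP_Newton}: the SQP subproblem (\ref{eq:sqp}) is equivalent to a Riemannian Newton step, so its quadratic model, which is precisely $\langle\zeta_{x^*},\D^2\mathcal{L}_x(x^*,\lambda_{x^*})[\zeta_{x^*}]\rangle$ restricted to $T_{x^*}\mathcal{M}$, must coincide with the Hessian of $f|_\mathcal{M}$ at $x^*$.
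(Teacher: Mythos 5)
Your argument is correct, and it is worth noting that the paper itself does not prove this proposition: it is quoted from Gabay's Remark~4.3, and the nearest in-paper argument is the proof of part (ii) of Theorem~\ref{thm:connection}, which generalizes the statement to quotient manifolds. Your route is the classical curve-based one: differentiate $f\circ\gamma$ twice along a feasible curve, use the first-order condition $f_x(x^*)=h_x(x^*)\lambda_{x^*}$ (correctly observing that the least-squares estimate (\ref{eq:least_square_lambda}) is exact at a critical point) together with the twice-differentiated feasibility identity to eliminate the $\ddot\gamma(0)$ term, and conclude that the intrinsic second derivative of $f|_{\mathcal{M}}$ at the critical point equals the restriction of $\D^2\mathcal{L}(x^*,\lambda_{x^*})$ to $T_{x^*}\mathcal{M}$. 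The paper's proof of the generalized statement instead differentiates the gradient-defining identity $\D f(x)[\xi_x]=\langle\mathcal{L}_x(x,\lambda_x),\xi_x\rangle=\bar g_x(\grad_x f,\xi_x)$ at the minimum and invokes the Riemannian submersion structure and the Koszul formula to identify the result with the Riemannian Hessian. Your approach is more elementary and metric-free, which is arguably the cleaner way to justify the phrase ``captures all second-order information'' for an embedded submanifold (the second derivative along curves at a critical point is intrinsic, depending only on the tangent vector); the paper's approach pays the price of introducing a metric and connection but in exchange isolates the horizontal space, which is what makes the extension to quotient manifolds in Theorem~\ref{thm:connection} go through. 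Your closing appeal to Proposition~\ref{prop:connecting_SQP_Newton} is dispensable and somewhat circular as a justification, but it is offered only as an alternative and the main computation stands on its own.
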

}

%
%
%

Propositions \ref{prop:connecting_SQP_Newton} and \ref{prop:second_order_SQP_Newton} show that SQP and the Riemannian Newton method are equivalent for embedded submanifolds. Furthermore, this connection emphasizes the role of the Hessian of the Lagrangian in the total space in effectively capturing second-order information of the problem, locally in the neighborhood of a minimum. 

\change{Our goal in the present paper is to build upon this connection in situations where the constrained search space is equipped with the differentiable structure of a {\it quotient manifold} rather than an {\it embedded submanifold}. This situation is frequent in applications due to symmetry properties of the optimization problem. The symmetries make the connection between SQP and Riemannian optimization less straightforward  because the Lagrangian is \emph{degenerate} at a local minimum. Nevertheless, we aim at showing that the structure of the Lagrangian in the total space is still very insightful in suggesting Riemannian metrics that capture second-order information on  the quotient manifold.}


\section{Metric selection on quotient manifolds}\label{sec:ingredients}

\change{Quotient manifolds are embedded in the \changeBM{manifold $\mathcal{M}$ (that itself is in $\mathbb{R}^n$)} up to an \emph{equivalence relation} $\sim$. The search space is then the set of equivalence classes. Optima are not isolated in the computational \emph{total} space $\mathcal{M}$, but they become isolated once we consider the abstract \emph{quotient} space $\mathcal{M}/\sim$. (\changeBM{A precise} characterization of the space $\mathcal{M}/\sim$ is shown in Section \ref{sec:Riemannian_framework}.)}

\change{Because of their prevalence in applications, quotient manifolds have been a focus of much research in Riemannian optimization \citep{smith94a, edelman98a, absil08a}. Algorithms are run in the total space, but under appropriate compatibility between the Riemannian structure of the total space and the Riemannian structure of the quotient manifold, they define algorithms in the quotient space.} \changeBM{The exposition for quotient manifolds here follows \citep[Chapters~3,~5, and~8]{absil08a}}.


\subsection{The Riemannian optimization viewpoint}\label{sec:Riemannian_framework}


Consider an equivalence relation $\sim$ in the \emph{total} (computational) space $ {\mathcal{M}}$. The quotient manifold ${\mathcal M}/\sim$ generated by this equivalence property consists of elements that are \emph{equivalence classes} of the form $ [ {x}] =  \{ {y} \in  {\mathcal M} :  {y} \sim  {x }\}$. Equivalently, if $[x]$ is an element in $\mathcal{M}/\sim$, then its matrix representation in $\mathcal{M}$ is $x$. For example, the Grassmann manifold $\Grass{r}{n}$, which is the set of $r$-dimensional subspaces in $\mathbb{R}^n$, is obtained by the equivalence relationship $\Grass{r}{n} = \Stiefel{r}{n}/\OG{r}$. $\Stiefel{r}{n}$ is the set of matrices of size $n\times r$ with \changeBMM{orthonormal} columns and $\OG{r}$ is the set of square $r\times r$ \emph{orthogonal matrices}. Each element in the total space $\mathcal{M} := \Stiefel{r}{n} = \{\mat{X} \in \mathbb{R}^{n\times r}: \mat{X}^T\mat{X} = \mat{I} \}$ is characterized by a matrix $\mat{X} \in \mathbb{R}^{n \times r}$ such that $\mat{X}^T \mat{X} = \mat{I}$. And an abstract element on the Grassmann manifold $\Grass{r}{n} $  is characterized by the equivalence class $[\mat{X}] := \{\mat{X\mat{O}}: \mat{O} \in \OG{r} \}$ at $\mat{X} \in \Stiefel{r}{n}$.

\begin{figure}[t]
\subfigure{
\includegraphics[scale = 0.33]{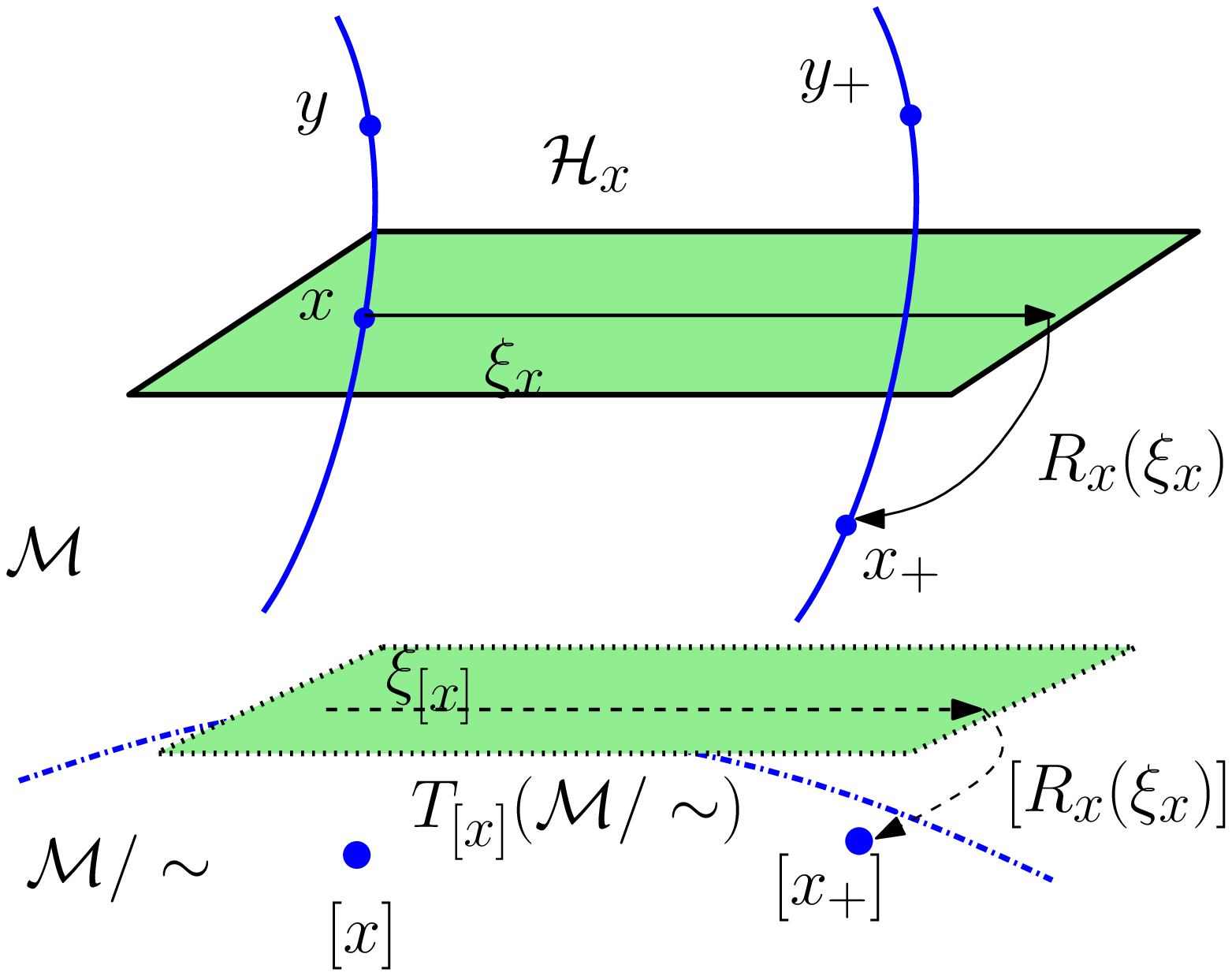}
}
\hspace{-0.5em}
\subfigure{
\includegraphics[scale = 0.56]{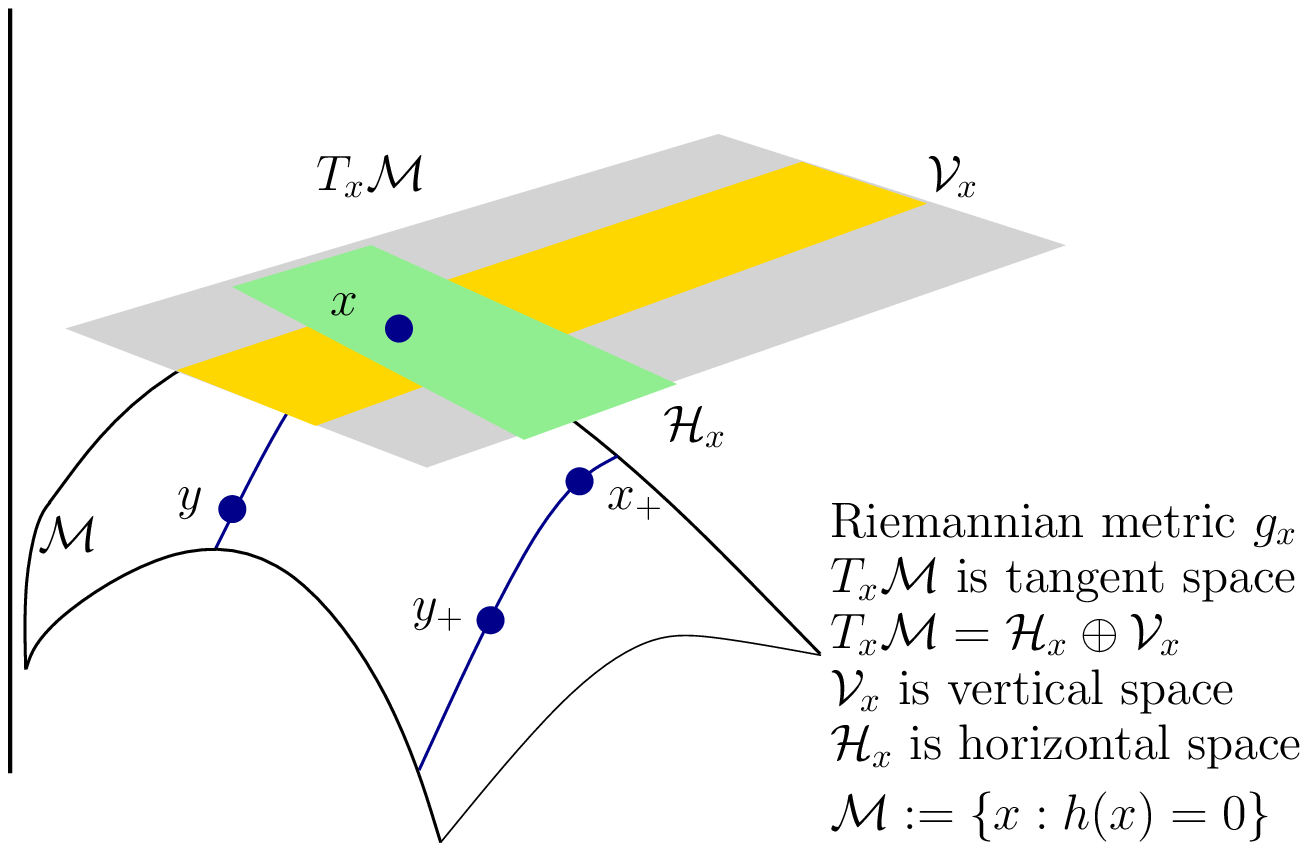}
}
\caption{A schematic view of optimization on constraints with symmetries. \change{The dotted lines represent abstract objects and the solid lines are their matrix representations.} The points $ {x}$ and $ y$ in the total (computational) space $ { \mathcal{M}}$ belong to the same equivalence class (shown in solid blue) and they represent a single point $ [ {x}] :=  \{ {y} \in  {\mathcal M} :  {y} \sim  {x }\}$ in the quotient space $\mathcal{M}/\sim$. An algorithm by necessity is implemented in the computation space, but conceptually, the search is on the quotient manifold. With the Riemannian metric $g$ from (\ref{eq:metric_quotient}), the quotient manifold $\mathcal{M}/\sim$ is \emph{submersed} into $\mathcal{M}$. The vertical space $\mathcal{V}_x$ is the linearization of the equivalence class. The horizontal space $\mathcal{H}_{ {x}}$ is complementary to the vertical space $\mathcal{V}_x$ and provides a matrix representation to the abstract tangent space $T_{{[x]}}( {\mathcal{M}}/\sim)$ of the Riemannian quotient manifold. Consequently, tangent vectors on the quotient space are lifted to the horizontal space. \change{Given $ {\xi}_{  x}$ as the horizontal lift, i.e., matrix representation of a tangent vector $\xi_{[x]}$ belonging to the abstract space $T_{[x]} (\mathcal{M}/\sim)$, $ {R}_{  x}$ maps it onto an element in $\mathcal{M}$.}}
\label{fig:manifold_optimization}
\end{figure}

Since the manifold $\mathcal{M}/\sim$ is an abstract space, the elements of its tangent space $T_{[x]} (\mathcal{M}/\sim)$ at $[x]$ also call for a matrix representation in \changeBM{the tangent space $T_x {\mathcal{M}}$} that respects the equivalence \change{relation} $\sim$. Equivalently, \change{the} matrix representation of $T_{[x]} (\mathcal{M}/\sim)$ should be restricted to the directions in the tangent space $T_{  x }  {\mathcal{M}}$ on the total space $ {\mathcal M}$ at ${  x}$ that do not induce a displacement along the equivalence class $[x]$. This is realized by decomposing $T_{  x}  {\mathcal M}$ into complementary subspaces, the \emph{vertical} and \emph{horizontal} subspaces such that $ \mathcal{V}_{  x}  \oplus \mathcal{H}_{  x} = T_{  x}  {\mathcal M}$. The vertical space $\mathcal{V}_{  x}$ is the tangent space of the equivalence class $[x]$. On the other hand, the horizontal space $\mathcal{H}_{  x}$, which is \change{any complementary subspace} to $\mathcal{V}_{  x}$ in $T_x\mathcal{M}$, provides a valid matrix representation of the abstract tangent space $T_{[x]} (\mathcal{M}/\sim)$ \citep[Section~3.5.8]{absil08a}. An abstract tangent vector $\xi_{[x]} \in T_{[x]} (\mathcal{M}/\sim)$ at $ [{x}]$ has a unique element in the horizontal space $ {\xi}_{ {x}}\in\mathcal{H}_{ {x}}$ that is called its \emph{horizontal lift}. \change{Our specific choice of the horizontal space is the subspace of $T_x \mathcal{M}$ that is the \emph{orthogonal complement} of $\mathcal{V}_{  x}$ in the sense of a Riemannian metric}.

\change{A Riemannian metric $ {g}_{  x} : T_x \mathcal{M} \times T_x \mathcal{M} \rightarrow \mathbb{R}$ at $ {x} \in \mathcal{  M}$ in the total space defines a Riemannian metric $g_{[x]}:T_{[x]} (\mathcal{M}/\sim) \times T_{[x]} (\mathcal{M}/\sim) \rightarrow \mathbb{R}$, i.e.,
\begin{equation}\label{eq:metric_quotient}
	g_{[x]}(\xi_{[x]},\eta_{[x]}):= {g}_{ {x}}( {\xi}_{ {x}}, {\eta}_{ {x}}),
\end{equation}
on the quotient manifold $\mathcal M/\sim$, provided that the expression ${g}_{ {x}}( {\xi}_{ {x}}, {\eta}_{ {x}})$ does not depend on a specific representation along the equivalence class $[x]$. Here $\xi_{[x]}$ and $\eta_{[x]}$ are tangent vectors in $T_{[x]} (\mathcal{M}/\sim)$, and $ {\xi}_{ {x}},  {\eta}_{ {x}}$ are their horizontal lifts in $\mathcal{H}_{ {x}}$ at $x$.} Equivalently, the definition (\ref{eq:metric_quotient}) is well posed when $ {g}_{  x}( {\xi}_{ {x}},  {\zeta}_{ {x}}) =  {g}_{  y} (  {\xi}_{ {y}},  {\zeta}_{ {y}})$ for all $ {x},  {y} \in [  x]$, where $ {\xi}_{ {x}},  {\zeta}_{ {x}} \in \mathcal{H}_{ {x}}$ and $ {\xi}_{ {y}},  {\zeta}_{ {y}} \in \mathcal{H}_{ {y}}$ are the horizontal lifts of $\xi_{[x]}, \zeta_{[x]} \in T_{[x]} (\mathcal{M}/\sim)$ along the same equivalence class $[x]$ \citep[Section~3.6.2]{absil08a}. In words, the metric $g_x$ is \emph{invariant} along the equivalence class $[x]$. Endowed with this Riemannian metric, the quotient manifold $\mathcal{M}/\sim$ is called a \emph{Riemannian quotient manifold} of $ {\mathcal M}$.

The choice of the metric (\ref{eq:metric_quotient}), which is invariant along the equivalence class $[ {x}]$, and of the horizontal space $\mathcal{H}_x$ as the orthogonal complement of $\mathcal{V}_{  x}$, in the sense of the Riemannian metric (\ref{eq:metric_quotient}), makes the space $\mathcal{M}/\sim$ a \emph{Riemannian submersion}. It allows for a convenient matrix representation of the gradient of a cost function. Figure \ref{fig:manifold_optimization} presents a schematic view of the search space. Consequently, the steepest-descent algorithm on the manifold $\mathcal{M}$ that respects the equivalence property $\sim$ on the space acquires the form shown in Table \ref{tab:Riemannian_steepest_descent}. Convergence of the steepest-descent algorithm in the neighborhood of a local minimum is shown by \citet[Theorems~4.3.1 and 4.5.1]{absil08a}. The main ingredients of Table \ref{tab:Riemannian_steepest_descent} are the gradient direction computation and the \emph{retraction} mapping.

\begin{table}[t]
\begin{center} \small
\begin{tabular}{ |p{12cm}| }
\hline
\\
The Riemannian steepest-descent algorithm for 
\[
\begin{array}[t]{lll}
\min\limits_{x \in \mathcal{M}} & f(x).\\
\end{array}
\]
\begin{enumerate}
\item Search direction: compute the negative Riemannian gradient $ {\xi}_x = -  {\grad}_{ { x}} {  f}$ with respect to the Riemannian metric $ {g}_{  x}$ (\ref{eq:metric_quotient}). \change{Equivalently, we solve the problem (\ref{eq:Riemannian_gradient_computation_qp}).}
\item Retract with backtracking line search: the next iterate is computed using the retraction (\ref{eq:retraction}) such that ${ x}_{+} = R_{{x}}( {s}{\xi}_x)$, where the step-size $s$ is obtained with backtracking line search.
\item Repeat until convergence.
\end{enumerate}
  \\
 \hline
\end{tabular}
\end{center} 
\caption{The Riemannian steepest-descent algorithm.}
\label{tab:Riemannian_steepest_descent} 
\end{table}

\subsubsection*{Riemannian gradient} The horizontal lift of the Riemannian gradient $\grad_{[x]}$ of a cost function, say $f : \mathcal{M} \rightarrow \mathbb{R}$, on the quotient manifold $\mathcal{M}/\sim$ is uniquely represented by the matrix representation, i.e., the
\begin{equation}\label{eq:Riemannian_gradient}
 \change{\hlift} { {\grad}_{[x]} f} = \grad_{ {x}}  {f},
\end{equation}
where $\grad_{ {x}}  {f}$ is the gradient of $f$ on the computational space ${\mathcal{M}}$ at $x$. \change{The equality in (\ref{eq:Riemannian_gradient}) is possible due to invariance of the cost function along the equivalence class $[ {x}]$, the choice of the Riemannian metric (\ref{eq:metric_quotient}), and the choice of the horizontal space $\mathcal{H}_x$ as the orthogonal complement of the vertical space $\mathcal{V}_x$ \cite[Section~3.6.2]{absil08a}.} 

The gradient $\grad_{ {x}}  {f}$ on the computational space is computed from its definition: it is the unique element of $T_{  x}  {\mathcal M}$ that satisfies \cite[Equation~3.31]{absil08a}
\begin{equation}\label{eq:Riemannian_gradient_computation}
{\rm D}  {f}(x)[ {\eta}_{  x}] =  {g}_{ {x}}(     \grad_{  x}  {f},    {\eta}_{ {x}}  ) \quad {\rm for\ all} \   {\eta}_{  x} \in T_{  x}  {\mathcal M},
\end{equation}
where $ {g}_{  x}$ is the Riemannian metric (\ref{eq:metric_quotient}) and ${\rm D}  {f}(x)[ {\eta}_{  x}]$ is the Euclidean directional derivative of $ {f}$ in the direction $\eta_{  x}$, i.e., ${\rm D}  {f}(x) [ {\eta}_{  x}] = \lim_{t \rightarrow 0} ( {f} ({ {x} + t  {\eta}_{  x} }) -  {f} ({  x}))/{t}$ \citep[Section~3.6]{absil08a}. An equivalent way of computing $\grad_{ {x}}  {f}$ is by solving the \emph{convex quadratic} problem
\begin{equation}\label{eq:Riemannian_gradient_computation_qp}
\begin{array}{lll}
\grad_{ {x}}  {f} = \argmin\limits_{\zeta_x \in T_x \mathcal{M}} f(x) - \langle f_x(x), \zeta_x \rangle+ \frac{1}{2} g_x(\zeta_x, \zeta_x),
\end{array}
\end{equation}
where $f_x(x)$ is the first-order derivative of the cost function $f$, $\langle \cdot, \cdot \rangle$ is the Euclidean inner product, and $ {g}_{  x}$ is the Riemannian metric (\ref{eq:metric_quotient}) at $x \in \mathcal{M}$. It should be noted that $\langle f_x(x), \zeta_x \rangle = {\rm D}  {f}(x)[ {\zeta}_{  x}]$. The equivalence between solutions to (\ref{eq:Riemannian_gradient_computation}) and (\ref{eq:Riemannian_gradient_computation_qp}) is established by observing that the condition (\ref{eq:Riemannian_gradient_computation}) is, in fact, equivalent to the optimality conditions of the convex quadratic problem (\ref{eq:Riemannian_gradient_computation_qp}).

\subsubsection*{Retraction} An iterative optimization algorithm involves computing a search direction and then ``moving in that direction.'' The default option on a Riemannian manifold is to move along geodesics, leading to the definition of the exponential map \citep[Section~5.4]{absil08a}. Because the calculation of the exponential map can be computationally  demanding, it is customary in the context of manifold optimization to relax the constraint of moving along geodesics. The exponential map is then relaxed to a {\it retraction}, which is any map $ {R}_{  x}: \mathcal{H}_{  x}  \rightarrow  {\mathcal{M}} $ that locally approximates the exponential map, up to first-order, on the manifold \citep[Definition~4.1.1]{absil08a}.
A natural update on the manifold is, thus, based on the update formula
\begin{equation}\label{eq:retraction}
	 {x}_{+} =  {R}_{ {x}}( s {\xi}_{ {x}}),
\end{equation}
where $ {\xi}_{  x} \in \mathcal{H}_{  x}$ is a search direction, \changeBM{$s$ is the step-size}, and $ {x}_+ \in  {\mathcal M}$.

\change{The retraction $ {R}_{  x}$ defines a retraction ${R}_{[x]}( {\xi}_{[x]}) : =[R_x (\xi_x)] $ on the Riemannian quotient manifold ${\mathcal{M}}/\sim$, provided that the equivalence class $[R_x (\xi_x)] $ does not depend on the specific choice of the matrix representations of $[x]$ and ${\xi}_{[x]}$. Here $ {\xi}_{  x}$ is the horizontal lift of an abstract tangent vector $\xi_{[x]} \in T_{[x]} (\mathcal{M} /\sim)$ in $\mathcal{H}_{  x }$ and $[\cdot]$ is the equivalence class defined earlier in the section. Equivalently, the retraction operation ${R}_{[x]}( {\xi}_{[x]}) : =[R_x (\xi_x)] $ is well defined on $\mathcal{M}/\sim$ when $R_x (\xi_x)$ and $R_y (\xi_y)$ belong to the same equivalence class, i.e., $[R_x (\xi_x)] = [R_y (\xi_y)]$ for all $x, y \in [x]$.}

\subsection{Inferring a metric from the Lagrangian}\label{sec:connection}
The practical performance of the Riemannian steepest-descent algorithm in Table \ref{tab:Riemannian_steepest_descent} greatly depends on the choice of the metric (\ref{eq:metric_quotient}). The dominant trend in Riemannian optimization has been to infer the metric from the geometry of the search space. Symmetry properties of the search space suggest choosing invariant metrics, that is, metrics that are not affected by a symmetry transformation of variables. In many situations, invariance properties of search space single out the choice of a unique metric \citep{absil08a, edelman98a, moakher02a}. However, the metrics that are motivated solely by the search space may not perform well in an optimization setup as they do not take into consideration the cost function \citep{manton02a}.

\change{To address the above issue, we connect the SQP approach in Table \ref{tab:sqp} to the Riemannian steepest-descent algorithm in Table \ref{tab:Riemannian_steepest_descent} with a specific metric that is induced by the Lagrangian (\ref{eq:Lagrangian}). In particular, we extend the ideas of Propositions \ref{prop:connecting_SQP_Newton} and \ref{prop:second_order_SQP_Newton} to the case of quotient manifolds. The extension has a twofold objective. First, it provides a guidance in choosing metrics on a manifold with symmetries. Second, it provides a framework to extend SQP to constraints with symmetries. }


\begin{theorem}\label{thm:connection}
\changeBMM{Consider an equivalence relation $\sim$ in the space $ {\mathcal{M}}$.} \changeBM{Assume that both $\mathcal{M}$ and $\mathcal{M}/\sim$ have the structure of a Riemannian manifold with the metric $\bar{g}$ on $\mathcal{M}$ and that the function $f: \mathcal{M} \rightarrow \mathbb{R}$ is a smooth function with isolated minima on the quotient manifold $\mathcal{M}/\sim$.} Assume also that $\mathcal{M}$ has the structure of an embedded submanifold in $\mathbb{R}^n$. 

If $x^* \in \mathcal{M}$ is a local minimum of $f:\mathcal{M} \rightarrow \mathbb{R}$ on $\mathcal{M}$, then the following hold:
\begin{enumerate}[(i)]
\item \change{$\langle \eta_{x^*} , \D^2 {\mathcal{L}}( {x}^*,  {\lambda}_{  x^*}) [ {\eta}_{  x^*}]\rangle = 0 $ for all $ \eta_{x^*} \in \mathcal{V}_{x^*}$, and}

\item \change{the quantity $\langle \xi_{x^*} , \D^2 {\mathcal{L}}( {x}^*,  {\lambda}_{  x^*}) [ {\xi}_{  x^*}]\rangle$ captures \changeBM{all} second-order information of the cost function $f$ on $\mathcal{M}/\sim$ for all $ \xi_{x^*} \in \mathcal{H}_{x^*}$,}
\end{enumerate}
where $\mathcal{V}_{x^*}$ is the vertical space, $\mathcal{H}_{x^*}$ is the horizontal space (the subspace of $T_{x^*} \mathcal{M}$ that is complementary to $\mathcal{V}_{x^*}$), $\langle \cdot, \cdot \rangle$ is the Euclidean inner product, and $\D^2 {\mathcal{L}}( {x^*},  {\lambda}_{  x^*}) [ {\xi}_{  x^*}]$ is the second-order derivative of $\mathcal{L} (x, \lambda_x)$ with respect to $x$ at $x^* \in \mathcal{M}$ applied in the direction $\xi_{x^*} \in \mathcal{H}_{x^*}$ and keeping $ {\lambda}_{  x^*}$ fixed to its least-squares estimate (\ref{eq:least_square_lambda}).

\end{theorem}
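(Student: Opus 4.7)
The plan is to apply Proposition \ref{prop:second_order_SQP_Newton} to the embedded submanifold $\mathcal{M} \subset \mathbb{R}^n$, and then to exploit two structural facts at $x^*$: (a) the invariance of $f$ along each equivalence class (implied by isolation of the minima on $\mathcal{M}/\sim$), and (b) that $h \equiv 0$ on $\mathcal{M}$, hence on every equivalence class $[x] \subset \mathcal{M}$. Together these imply that the restriction of $\mathcal{L}(\cdot,\lambda_{x^*})$ to any leaf through $x^*$ is constant, which should pin down the vertical behavior of $\D^2\mathcal{L}$ entirely. The KKT identity $f_x(x^*) - h_x(x^*)\lambda_{x^*} = 0$, which holds because $\lambda_{x^*}$ from \ref{eq:least_square_lambda} is the correct Lagrange multiplier at a constrained minimum, will be used repeatedly to discard first-order terms.

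For part (i), I would pick any $\eta_{x^*} \in \mathcal{V}_{x^*}$ and a smooth curve $c(t) \subset [x^*]$ with $c(0)=x^*$ and $\dot c(0)=\eta_{x^*}$; such a curve exists because $\mathcal{V}_{x^*}$ is by definition the tangent space to the leaf $[x^*]$ at $x^*$. Invariance of $f$ on $[x^*]$ and $h(c(t)) \equiv 0$ then make $\mathcal{L}(c(t),\lambda_{x^*})$ equal to $f(x^*)$ for all $t$. Differentiating twice at $t=0$ and discarding the term $\langle f_x(x^*)-h_x(x^*)\lambda_{x^*},\ddot c(0)\rangle$ via KKT yields the claimed identity.

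For part (ii), a two-parameter variation will kill the off-diagonal block of $\D^2\mathcal{L}$. Given $\xi_{x^*} \in \mathcal{H}_{x^*}$ and $\eta_{x^*} \in \mathcal{V}_{x^*}$, I would construct a smooth map $\gamma(s,t) \in \mathcal{M}$ with $\gamma(0,0)=x^*$, $\partial_s\gamma(0,0)=\xi_{x^*}$, $\partial_t\gamma(0,0)=\eta_{x^*}$, and with $t\mapsto \gamma(s,t)$ lying in a single equivalence class for each fixed $s$; concretely, one flows a curve $\alpha(s)$ with $\dot\alpha(0)=\xi_{x^*}$ along a vector field tangent to the equivalence classes that agrees with $\eta_{x^*}$ at $x^*$. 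Invariance and feasibility then give $\mathcal{L}(\gamma(s,t),\lambda_{x^*}) = f(\gamma(s,0))$, which is independent of $t$, so $\partial_s\partial_t\mathcal{L}|_{(0,0)}=0$; expanding and again invoking KKT to drop the first-order term yields $\langle \xi_{x^*},\D^2\mathcal{L}(x^*,\lambda_{x^*})[\eta_{x^*}]\rangle = 0$. Combined with (i), the decomposition $\zeta_{x^*}=\xi_{x^*}+\eta_{x^*}$ of any $\zeta_{x^*} \in T_{x^*}\mathcal{M}$ collapses to $\langle \zeta_{x^*},\D^2\mathcal{L}[\zeta_{x^*}]\rangle = \langle \xi_{x^*},\D^2\mathcal{L}[\xi_{x^*}]\rangle$. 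Proposition \ref{prop:second_order_SQP_Newton} applied to $\mathcal{M}$ identifies the left-hand side with the full second-order behavior of $f$ on $\mathcal{M}$ at $x^*$, and since the Riemannian submersion $\pi:\mathcal{M}\to \mathcal{M}/\sim$ sends $\zeta_{x^*}$ and its horizontal part $\xi_{x^*}$ to the same abstract tangent vector in $T_{[x^*]}(\mathcal{M}/\sim)$, the right-hand side captures the full second-order information of $f$ on $\mathcal{M}/\sim$ through the horizontal lift.

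The main obstacle I anticipate is making the two-parameter variation in (ii) fully rigorous: it relies on local integrability of the vertical distribution $\{\mathcal{V}_x\}$ with equivalence classes as leaves, which is precisely the content of the hypothesis that $\mathcal{M}/\sim$ carries a quotient manifold structure. Once this is in place, the rest of the argument is bookkeeping of first- and second-order Taylor terms together with the KKT relation at $x^*$.
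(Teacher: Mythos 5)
Your proof of part (i) is essentially the paper's own argument (constancy of $\mathcal{L}(\cdot,\lambda_{x^*})$ along the leaf $[x^*]$, twice-differentiate, kill the $\ddot c(0)$ term with $\mathcal{L}_x(x^*,\lambda_{x^*})=0$), merely made explicit with a curve. For part (ii), however, you take a genuinely different and valid route. The paper never touches the horizontal--vertical cross terms of $\D^2\mathcal{L}$: it differentiates the identity $\mathrm{D}f(x)[\xi_x]=\langle\mathcal{L}_x(x,\lambda_x),\xi_x\rangle=\bar g_x(\grad_x f,\xi_x)$ at $x^*$ along a horizontal direction, then invokes the Riemannian-submersion machinery (the projection formula $\Pi_{x^*}(\hess_{x^*}f[\xi_{x^*}])$ for the horizontal lift of the quotient Hessian and the Koszul formula at a critical point) to identify $\langle\xi_{x^*},\D^2\mathcal{L}(x^*,\lambda_{x^*})[\xi_{x^*}]\rangle$ directly with $\bar g_{x^*}(\xi_{x^*},\Pi_{x^*}(\hess_{x^*}f[\xi_{x^*}]))$. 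You instead prove the stronger structural fact $\langle\xi_{x^*},\D^2\mathcal{L}(x^*,\lambda_{x^*})[\eta_{x^*}]\rangle=0$ for $\xi_{x^*}\in\mathcal{H}_{x^*}$, $\eta_{x^*}\in\mathcal{V}_{x^*}$ via a two-parameter variation (your flow construction along a vertical vector field is sound, and the integrability you worry about is indeed supplied by the quotient-manifold hypothesis), so that the full quadratic form on $T_{x^*}\mathcal{M}$ collapses onto its horizontal restriction. What your approach buys: it exposes the complete block-diagonal structure of $\D^2\mathcal{L}$ at $x^*$ and, notably, does not require $\mathcal{H}_{x^*}$ to be the $\bar g$-orthogonal complement of $\mathcal{V}_{x^*}$, whereas the paper's proof leans on that orthogonality. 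What the paper's approach buys: it lands exactly on the quotient Hessian as an operator via citable results, while your last step is slightly under-justified in two places. First, Proposition \ref{prop:second_order_SQP_Newton} as stated assumes isolated minima on $\mathcal{M}$, which fails here (minima form whole equivalence classes); you should either re-derive its content at a critical point of $f|_{\mathcal{M}}$ (which does go through, the Hessian there being simply degenerate) or not cite it verbatim. Second, the identification of $\langle\xi_{x^*},\D^2\mathcal{L}[\xi_{x^*}]\rangle$ with the quotient Hessian quadratic form needs one more sentence: at a critical point the Hessian quadratic form equals $\tfrac{d^2}{dt^2}f(c(t))\big|_{t=0}$ for \emph{any} curve with the prescribed velocity, so it is metric-independent and can be computed on the quotient through any lifted curve with horizontal velocity $\xi_{x^*}$. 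With those two sentences added, your argument is complete.
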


\begin{proof}

Statement (i) corresponds to the degeneracy of the Lagrangian at a local minimum and statement (ii) is an extension of Proposition \ref{prop:second_order_SQP_Newton} to the case of quotient manifolds. 


The Lagrangian \change{$\mathcal{L}(x^*, \lambda)$} at $x^*$, because both the cost and the constraint terms remain invariant under the equivalence \change{relation} $\sim$, is \emph{constant} along the equivalence class $[x^*] := \{ y^* \in \mathcal{M} : y^* \sim x^*\}$. It should be noted that since $x^*$ is a local minimum, the first-order derivative of the Lagrangian with respect to $x$ at $x^*$ is zero, i.e., \changeBMM{$\mathcal{L}_{x} (x^*, \lambda_{x^*}) = 0$}. \changeBM{It should be noted that $\mathcal{L}_{x} (x, \lambda_{x})$ is the derivative of $\mathcal{L}(x, \lambda_x)$ with respect to the first argument, i.e., $\lambda_x$ is kept fixed.} Differentiating the Lagrangian $\mathcal{L}(x, \lambda_x)$ twice at $x^*$ along the linearization of the equivalence class $[x^*]$, that is the vertical space $\mathcal{V}_{x^*}$, results in the first equality in (i).

To prove (ii) of the theorem, we \changeBM{exploit the} Riemannian structure on $\mathcal{M}/\sim$. The theory of Riemannian submersion \citep[Chapter~3]{absil08a} states that the horizontal space $\mathcal{H}_{x}$ is \changeBMM{\emph{orthogonal}} to the vertical space $\mathcal{V}_x$ with \changeBMM{respect to} the metric $\bar{g}_x$ and allows us to compute the Riemannian gradient and Hessian of $f$ on $\mathcal{M}/\sim$ using the \emph{orthogonal projection} of their counterparts in the total space $\mathcal{M}$. From the computation of the Riemannian gradient $\grad_x f$ and the first-order derivative of the Lagrangian we have the equality \citep[Equation~3.31]{absil08a} 
\[
\begin{array}{llll}
&{\rm D}  {f}(x)[ {\xi}_{  x}] = \langle \mathcal{L}_x (x, \lambda_{x}), {\xi}_{  x} \rangle = {\bar g}_{ {x}}(     \grad_{  x}  {f},    {\xi}_{ {x}}  ) \quad {\rm for\ all\ }\xi_x \in T_x \mathcal{M}.
\end{array}
\]
Taking the Euclidean directional derivative of the above equation \change{at} $x^*$ along $\xi_{x^*} \in \mathcal{H}_{x^*}$ with the additional information that $ \grad_{x^*} f= 0$ ($x^*$ is a local minimum),
\begin{equation}\label{eq:proof_eq}
\begin{array}{llll}
\langle \xi_{x^*},\D^2  {f}(x^*)[ {\xi}_{  x^*}] \rangle = \langle \xi_{x^*} , \D^2 {\mathcal{L}}( {x}^*,  {\lambda}_{  x^*}) [ {\xi}_{  x^*}]\rangle = \bar{g}_{x^*}(\xi_{x^*}, \D \grad_{x^*} f [\xi_{x^*}] ),
\end{array}
\end{equation}
where the specific operation $\D \grad_{x^*} f [\xi_{x^*}] $ should be treated as the \emph{Euclidean differential of a vector field}, the Riemannian gradient $\grad  _{x^*} f $, along $\xi_{x^*}$. \changeBM{In this case, $\D^2 {\mathcal{L}}( {x},  {\lambda}_{  x}) [ {\xi}_{  x}]$ is the second-order derivative of $\mathcal{L}(x, \lambda_x)$ with respect to $x$ along $\xi_x$ keeping $\lambda_x$ fixed.}

We define $\Pi_{x}:\mathbb{R}^n \rightarrow \mathcal{H}_x$ as the orthogonal projection operator in the metric $\bar{g}_x $, and ${\hess}_{x} f [\xi_{x}]$ as the Riemannian Hessian in the total space $\mathcal{M}$ applied along the direction $\xi_x \in \mathcal{H}_x$. The horizontal lift of the Riemannian Hessian ${\hess}_{[x]} f [\xi_{[x]}] $ on the quotient manifold $\mathcal{M}/\sim$ has the characterization $\Pi_{x}({\hess}_{x} f [\xi_{x}]) $ \citep[Proposition~5.3.3]{absil08a}. Additionally, the \emph{Koszul formula} relates the operation $\D \grad_{x} f [\xi_{x}] $ and the Riemannian Hessian in the total space $\mathcal{M}$ \citep[Theorem~5.3.1]{absil08a}. 

At the minimum $x^*$, second-order information of $f$ on the manifold $\mathcal{M}/\sim$ is captured by the quantity $\bar{g}_{x^*}(\xi_{x^*}, \Pi_{x^*}({\hess}_{x^*} f [\xi_{x^*}]) )$ for all $\xi_{x^*} \in \mathcal{H}_{x ^*}$. The term simplifies as follows:
\[
\begin{array}{lll}
\bar{g}_{x^*}(\xi_{x^*}, \underbrace{\Pi_{x^*}({\hess}_{x^*} f [\xi_{x^*}])}_{
\left \{\footnotesize
\begin{array}{ll}
\hlift {\rm the}\\
{\rm Hessian\ on\ } \mathcal{M}/\sim
\end{array}
\right \}
}) &= &\bar{g}_{x^*}(\xi_{x^*}, \underbrace{\displaystyle {\hess}_{x^*} f [\xi_{x^*}]}_{{\rm the\ Hessian\ on\ } \mathcal{M}}) \quad {\rm for\ } \Pi_{x^*}(\xi_{x^*}) =\xi_{x^*}\\
& = & \bar{g}_{x^*}(\xi_{x^*}, \D \grad_{x^*} f [\xi_{x^*}] ) \ \ {\rm from\ the\ Koszul\ formula\ } \\
&  &\ \ \quad \quad \qquad \qquad \qquad \qquad \ {\rm and\ } \grad_{x^*} f= 0 \\
& =& \langle \xi_{x^*} , \D^2 {\mathcal{L}}( {x}^*,  {\lambda}_{  x^*}) [ {\xi}_{  x^*}]\rangle
\quad  {\rm from\ } (\ref{eq:proof_eq}).
\end{array}
\]
This proves statement (ii). 

\end{proof}

\change{Theorem \ref{thm:connection} states that the underlying symmetries make the Lagrangian \emph{degenerate} in the tangent space $T_x \mathcal{M}$ of the total space $\mathcal{M}$. The other important observation is that the quantity $\langle \xi_x, \D^2 \mathcal{L} (x, \lambda_x)[\xi_x]\rangle$ captures second-order information at a local minimum along the horizontal space, where the horizontal space is \changeBMM{the} subspace of $T_x \mathcal{M}$ that is the \changeBMM{orthogonal complement} of the vertical space $\mathcal{V}_x$.}


As an immediate consequence of Theorem \ref{thm:connection}, we observe that in the neighborhood of a minimum, a selection of the search direction is given by solving 
\begin{equation}\label{eq:sqp_horizontal}
\begin{array}{lll}
\argmin\limits_{ \zeta_x \in \mathcal{H}_x} &  {f}(  x) + \langle  {f}_x (  x),  {\zeta}_{  x} \rangle + \frac{1}{2}\langle  {\zeta}_{  x}, \D^2 {\mathcal{L}}( {x},  {\lambda}_{  x}) [ {\zeta}_{  x}]\rangle, \\
\end{array}
\end{equation}
where \change{$\mathcal{H}_x$ is any predefined horizontal space characterization}. Followed by a retraction operation (Section \ref{sec:Riemannian_framework}), (\ref{eq:sqp_horizontal}) defines locally a steepest-descent algorithm on the quotient manifold $\mathcal{M}/\sim$, that is, an algorithm that iterates on the classes of equivalences. Here $\langle \cdot, \cdot\rangle$ is the Euclidean inner product and $f_x(x)$ is the first-order derivative of the function $f$. \change{The scheme with (\ref{eq:sqp_horizontal}) has the interpretation of a steepest-descent algorithm on the quotient manifold $\mathcal{M}/\sim$ and can be considered as an extension of the classical SQP approach, that has been defined on embedded constraints, to quotient manifolds. The significance of the result is twofold: first,  the computational problem (\ref{eq:sqp_horizontal}) is considerably simpler than the computational machinery needed for a steepest-descent or a quasi-Newton algorithm on a general quotient manifold. Second, it decouples the second-order information term, i.e., $\D^2 {\mathcal{L}}( {x},  {\lambda}_{  x})$, from the horizontal space characterization $\mathcal{H}_x$ that could be predefined.} \changeBM{The expression (\ref{eq:sqp_horizontal}) is appealing  as a starting point to equip the quotient manifold with a Riemannian structure since it captures second-order information  in the neighborhood of a minimum.}

\subsection{Regularized Lagrangians for Riemannian optimization}\label{sec:convexity}
\changeBM{Motivated by the expression (\ref{eq:sqp_horizontal}), we introduce the family of regularized Lagrangians
\begin{equation}\label{eq:regularized_Lagrangian}
 \mathcal{\tilde L}(x, \lambda_x) = \omega_1 f(x) - \omega_2 \langle \lambda_x, h(x) \rangle,
 \end{equation}
with the corresponding candidate metrics
\begin{equation}\label{eq:metric_decomposition}
\begin{array}{lll}
& g_x(\xi_x, \eta_x) & = \langle\xi_x, \D ^2 \mathcal{\tilde L} (x, \lambda_x) [\eta_x]\rangle \\
&               & = \omega_1 \underbrace{\langle\xi_x, \D^2 f (x) [\eta_x]\rangle}_{\rm cost\ related} + \omega_2 \underbrace{\langle\xi_x, \D^2 c (x, \lambda_x) [\eta_x]\rangle}_{\rm constraint\ related} 
\end{array}
\end{equation}
in the total space $\mathcal{M}$, \changeBMM{where $c(x, \lambda_x) = -\langle \lambda_x, h(x) \rangle $}. The choice of the regularizing parameters  $\omega_1 \in [0,1]$ and $\omega_2 \in [0,1]$  is problem dependent, but concrete choices  are discussed below and also illustrated in Figure \ref{fig:tradeoff}. The default choice of $\omega_1 = 1$ and $\omega_2 = 1$ is appealing because it captures the full second-order information. However, it also leads to a degenerate Lagrangian at the minimum. The choices of $\omega_1$ and $\omega_2$ thus result from a trade-off between ensuring the positive definiteness  of the metric candidate (\ref{eq:metric_decomposition}) in the tangent space $T_x \mathcal{M}$ while capturing second-order information near minima. }

\changeBM{If in addition to being positive definite the metric (\ref{eq:metric_decomposition}) is invariant along the equivalence class $[x]$,  and if the horizontal space $\mathcal{H}_x$ is  chosen as the orthogonal subspace to the vertical space $\mathcal{V}_x$, then the manifold $\mathcal{M}/\sim$ has the structure of a Riemannian submersion \citep{smith94a, edelman98a, absil08a}. This simplifies the computation of the search direction, which then amounts to solving the problem 
\begin{equation}\label{eq:sqp_mod}
\begin{array}{lll}
\argmin\limits_{ \zeta_x \in T_x \mathcal{M}} &  {f}(  x) + \langle  {f}_x (  x),  {\zeta}_{  x} \rangle + \frac{1}{2} g_x(\zeta_x, \zeta_x),
\end{array}
\end{equation} 
where $\langle \cdot, \cdot\rangle$ is the Euclidean inner product and $f_x(x)$ is the first-order derivative of the function $f$, and $g_x( \cdot, \cdot)$ is the Riemannian metric  (\ref{eq:metric_decomposition}). It should be noted that even though the minimization (\ref{eq:sqp_mod}) is in the total tangent space $T_x\mathcal{M}$, the solution $\zeta_x ^*$ of (\ref{eq:sqp_mod}), by construction, belongs to the chosen horizontal space $\mathcal{H}_x$ that is orthogonal to $\mathcal{V}_x$. }

We further discuss two scenarios below that suggest how to exploit the available structure to construct novel Riemannian metrics. The problem structure can be exploited in more general situations along the same lines.



\begin{figure}[t]
\centering
\includegraphics[scale = 0.50]{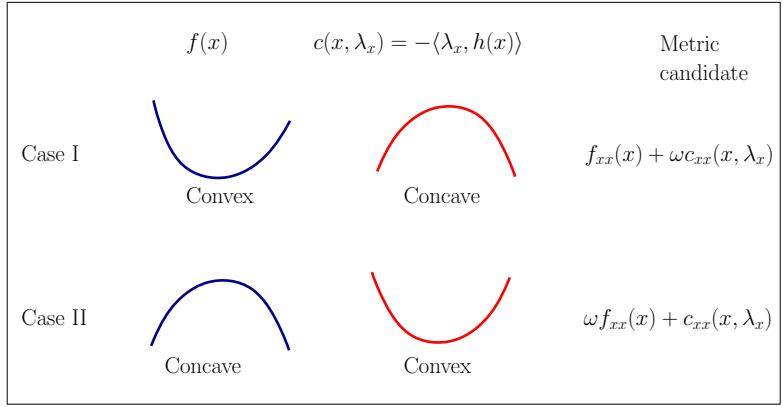}
\caption{Choosing metrics for the Riemannian steepest-descent algorithm in Table \ref{tab:Riemannian_steepest_descent}. Shown are two extreme situations in which the regularized Lagrangian (\ref{eq:regularized_Lagrangian}) provides a clear metric candidate locally in the neighborhood of a minimum. $f_{xx}(x)$ is the second-order derivative of $f(x)$ and $c_{xx}(x, \lambda_x)$ is the second-order derivative of $c(x, \lambda_x)$ with respect to $x$ keeping $\lambda_x$ fixed. Because of local convexity of the Lagrangian (on the tangent space) at a minimum, convex and concave structures of the function $f$ lead to \changeBM{a} well defined family of metrics parameterized by the regularization parameter $\omega \in [0, 1)$. It locally captures second-order information of the problem. The case $\omega = 1 $ is discarded to prevent singularity of the metric candidate at a local minimum. Additionally, to extend the metrics away from a minimum, the parameter $\omega$ is updated at every iteration, e.g., with the procedure shown in (\ref{eq:update_omega}).}
\label{fig:tradeoff}
\end{figure}

\subsubsection*{Case I: minimizing a strictly convex function} \label{sec:convex}
Consider the case when $ {f}$ is a \emph{strictly} convex function. In this case the second-order derivative $ {f}_{xx}(  x) \succ 0$ (due to strict convexity assumption) is a good metric candidate. In addition, locally in the neighborhood of a minimum, a family of Riemannian metrics is identified from (\ref{eq:metric_decomposition}) as 
\begin{equation}\label{eq:preconditioned_metric_quotient_convex}
\begin{array}{lll}
g_x(\xi_x, \eta_x) = \underbrace{\langle\xi_x,  \D^2f (x) [\eta_x]\rangle}_{f_{xx} \succ 0 {\rm\ and\ dominating}} + \omega \langle\xi_x, \D^2 c (x, \lambda_x) [\eta_x]\rangle 
\end{array}
\end{equation}
by selecting $\omega_1 = 1$ and $\omega_2 = \omega $, where $\omega \in [0, 1)$, $\xi_x, \eta_x$ are tangent vectors in $T_x \mathcal{M}$, $c(x, \lambda_x) = -\langle \lambda_x, h(x)\rangle$, and $\D^2 c (x, \lambda_x) [\eta_x]$ is the second-order derivative of $c(x, \lambda_x)$ with respect to $x$ keeping $\lambda_x$ fixed and that is applied along the direction $\eta_x$. \change{The case $\omega = 1 $ is discarded to prevent singularity of the metric candidate (\ref{eq:preconditioned_metric_quotient_convex}) in $T_x \mathcal{M}$ in the neighborhood of a minimum.}

\subsubsection*{Case II: maximizing a strictly convex function} \label{sec:concave}
Consider the problem of maximizing a convex cost function, that is equivalent to minimizing a concave cost function, on a manifold. In this case, $ {f}_{xx}(x, \lambda_x) \prec 0$, and locally in the neighborhood of a minimum, second-order information of $c(x, \lambda_x) = -\langle \lambda_x, h(x)\rangle$ is the source of convexity. This fact follows from \changeBMM{the} second-order optimality condition of the optimization problem \citep[Chapter~18]{nocedal06a}. Here the problem structure suggests a family of Riemannian metrics
\begin{equation}\label{eq:preconditioned_metric_quotient_concave}
\begin{array}{lll}
 g_x(\xi_x, \eta_x)  = \omega \langle\xi_x, \D^2  f (x) [\eta_x]\rangle + \underbrace{\langle\xi_x,  \D ^2 c (x, \lambda_x) [\eta_x]\rangle}_{c_{xx} {\rm\ is\ locally\ positive\ definite}} 
\end{array}
\end{equation}
by selecting $\omega_1 = \omega$ and $\omega_2 = 1$ in (\ref{eq:metric_decomposition}), where $\omega \in [0, 1)$, $\xi_x, \eta_x$ are tangent vectors in $T_x \mathcal{M}$, $c(x) = -\langle \lambda_x, h(x)\rangle$, $c_{xx}(x, \lambda_x)$ is the second-order derivative of $c(x, \lambda_x)$ with respect to $x$ keeping $\lambda_x$ fixed. \change{Once again $\omega = 1 $ is discarded to prevent singularity of the metric candidate (\ref{eq:preconditioned_metric_quotient_concave}) in $T_x \mathcal{M}$ in the neighborhood of a minimum.}

\subsubsection*{Globalizing the local metrics} \label{sec:weight}
The parameter $\omega \in [0, 1)$ in the metrics (\ref{eq:preconditioned_metric_quotient_convex}) and (\ref{eq:preconditioned_metric_quotient_concave}), apart from providing a family of Riemannian metrics, also plays a critical role in the numerical performance of the Riemannian steepest-descent algorithm in Table \ref{tab:Riemannian_steepest_descent}. With $\omega = 0$, the Riemannian metric captures only part of second-order information and therefore, locally in the neighborhood of a minimum, the Riemannian steepest-descent algorithm may converge poorly, e.g., linearly. On the other hand with $\omega \to 1$, the Riemannian metric captures the full second-order information and the Riemannian steepest-descent algorithm is expected to show better convergence. A numerical technique for interpolating between these two extreme scenarios is to vary $\omega = [0, 1)$ at every iteration with \changeBM{an} increasing barrier function that tends to $1$ as the number of iterations increases. A simple updating technique is $\omega(k) =  1 - 2^{k-1}$, where $k$ is the iteration number. A strategy to safeguard against a non-descent search direction, e.g., \changeBMM{when solving the quadratic programming problem (\ref{eq:sqp_mod}) with the metric (\ref{eq:metric_decomposition})}, is to ignore the updated $\omega$ that resulted in a non-descent direction (checking this numerically is straightforward) and restart the procedure of updating $\omega$ again.

A different technique is to modify $\omega$ as and when required. For example, defining $\delta = 1 - \omega$, we have the strategy where at the $k{\rm th}$ iteration
\begin{equation}\label{eq:update_omega}
\delta_{k} = \left \{
\begin{array}{lll}
0.5\delta_{k-1} &  {\rm when\ a\ descent\ direction\ is\ obtained}\\
4\delta_{k -1} &  {\rm when\ a\ non-descent\ direction\ is\ obtained},
\end{array}
\right .
\end{equation}
with $\delta_0 = 1$. Care is taken to ensure that $\omega \in [0, 1)$ for all iterations.

Safeguards similar to the trust-regions, i.e., by constraining the norm of the search direction, can also be implemented to ensure that the search direction computed with the Riemannian metric remains a locally descent direction \citep[Section~18.5]{nocedal06a}.


\section{Quadratic optimization with orthogonality constraints: revisiting the generalized eigenvalue problem}\label{sec:quadratic_orthogonality}

Constrained quadratic optimization problems arise naturally in a number of applications, especially while solving linear systems of matrix equations \citep[Section~2.2]{absil08a}. Also popular are the \emph{orthogonality} constraints in large-scale problems that are imposed to identify relevant smaller dimensional subspaces \citep{edelman98a}. Specific optimization problems include the generalized eigenvalue problem \citep{edelman98a, absil02a, absil10a}, the generalized orthogonal Procrustes problem \citep{elden99a}, and the joint diagonalization problem in signal processing \citep{theis09a}, to name just a few. 

For the sake of illustration, we specifically focus on the well-studied generalized eigenvalue problem that computes the smallest eigenvalues and eigenvectors of the matrix $\mat{B}^{-1}\mat{A}$, where $\mat{A}$ is a symmetric matrix of size $n\times n$ and $\mat{B}$ is a symmetric positive definite matrix of size $n\times n$ [\citealp[Section~4.5]{edelman98a}, \citealp[Chapter~8]{golub96a}]. This is realized by solving the optimization problem below iteratively, an extensively researched question in the literature [\citealp[Chapter~8]{golub96a}, \citealp{absil04c}]. In this section we exploit the quadratic nature of the cost function and the constraints to show that a family of Riemannian metrics has a simple characterization. It is also shown that the algorithms that result from the proposed metrics connect to a number of established algorithms, \change{each} of which is interpreted as a steepest-descent algorithm with a specific Riemannian metric \change{choice}.

The minimal $r$-eigenspace of $\mat{B}^{-1}\mat{A}$ is computed iteratively by solving the constrained quadratic optimization problem
\begin{equation}\label{eq:generalized_eigenvalue_problem}
\begin{array}{lll}
\min\limits_{{\mat X}\in \mathbb{R}^{n\times r}}  & \frac{1}{2} \trace( \mat{X}^T\mat{AX}  )\\
\subject & \mat{X}^T\mat{BX} = \mat{I},
\end{array}
\end{equation}
where the constraint set of $n\times r$ matrices that satisfy $\mat{X}^T\mat{BX} = \mat{I}$ is known as the \emph{generalized Stiefel manifold} $\Gstiefel{r}{n}{\mat B}$. The constraint enforces orthogonality among vectors in coordinates spanned by $\mat{B} ^{1/2}$. Specifically when $\mat{B} = \mat{I}$, the generalized Stiefel manifold is the popular Stiefel manifold $\Stiefel{r}{n} := \{\mat{X} \in \mathbb{R}^{n \times r}: \mat{X}^T\mat{X} = \mat{I} \}$ \citep{edelman98a}.

\changeBM{It should be noted that the optimization problem (\ref{eq:generalized_eigenvalue_problem}) is invariant under the transformation $\mat{X} \mapsto\mat{XO}$ for all $\mat O \in \OG{r}$, where $ \OG{r}$ is the set of $r\times r$ orthogonal matrices.} As a consequence, the problem (\ref{eq:generalized_eigenvalue_problem}) is an optimization problem on the abstract quotient space $\Gstiefel{r}{n}{\mat B}/ \OG{r}$, also known as the \emph{generalized Grassmann manifold}. For the case $\mat{B} = \mat{I}$, this again boils down to the well-known Grassmann manifold $\Grass{r}{n}$, which is the set of $r$-dimensional subspaces in $\mathbb{R}^n$ \citep{edelman98a}. The optimization problem (\ref{eq:generalized_eigenvalue_problem}) is, therefore, reformulated on the generalized Grassmann quotient manifold, i.e.,
\begin{equation}\label{eq:grassmann_optimization}
\begin{array}{lll}
\min\limits_{\mat{X} \in \mathbb{R}^{n\times r}}  & \frac{1}{2}\trace(\mat{X}^T\mat{AX})\\
\subject & [{\mat X}] \in \Gstiefel{r}{n}{\mat B}/ \OG{r},
\end{array}
\end{equation}
where the optimization is on the set of equivalence classes $[{\mat X}]: = \{ \mat{XO}: \mat{O} \in \OG{r} \}$ at $\mat{X} \in \Gstiefel{r}{n}{\mat B}$.

The \change{\emph{conventional} choice of the metric} in the Riemannian framework is
\begin{equation}\label{eq:grassmann_metric_standard}
 {g}_{  x}( {\eta}_{  x},  {\xi}_{  x} )  = \trace(  {\eta}_{  x} ^T  {\xi}_{  x}  ),
\end{equation} 
where $ {x} = \mat{X} \in \Gstiefel{r}{n}{\mat B}$ and $ {\xi}_{  x},  {\eta}_{  x}$ are vectors in the tangent space of the constraints (the matrix characterization \change{of} the tangent space is shown in Table \ref{tab:grassmann}). \change{It is the unique metric that is motivated by \emph{invariance} on the orthogonal group, i.e., for the case when $r = n$ it is the unique choice \citep{moakher02a}. Hence, it becomes a conventional choice for $r < n$.} Because of its simplicity and its geometric consideration, the metric (\ref{eq:grassmann_metric_standard}) is also advocated by \citet{edelman98a, absil08a}. 
 
In contrast, the developments in Section \ref{sec:convexity} suggest a \emph{family of Riemannian metrics} that take the complete problem structure into account by computing the regularized Lagrangian (\ref{eq:regularized_Lagrangian}) and its second-order derivative. To this end, we have the following matrix representations for (\ref{eq:generalized_eigenvalue_problem}).
\begin{equation}\label{eq:lagrangian_grassmann}
\begin{array}{lrl}
&  {\mathcal L}( {x},  {\lambda}_{  x}) = & \trace(\mat{X}^T\mat{AX}) / 2 -  \langle  {\lambda}_{  x},\mat{X}^T \mat{BX} - \mat{I} \rangle / 2 \\
\Rightarrow &  {\mathcal L}_{x}( {x},  {\lambda}_{  x}) =
& \mat{AX} - \mat{BX} {\lambda}_{  x} \\
\Rightarrow &  \D ^2{\mathcal L}( {x},  {\lambda}_{  x}) [\xi_{x}] =
& \mat{A}\xi_x - \mat{B}\xi_x {\lambda}_{  x}, \\
\end{array}
\end{equation}
where $x$ has the matrix representation $\mat{X} \in \Gstiefel{r}{n}{\mat B}$, $\langle \cdot, \cdot \rangle$ is the Euclidean inner product, and the least-squares Lagrange multiplier is $ {\lambda}_{  x}= \allowbreak \Sym(  \allowbreak (\mat{X}^T\mat{BBX})^{-1} \allowbreak (\mat{X}^T \mat{BAX}))$ from (\ref{eq:least_square_lambda}) with the additional symmetry condition from the constraint, where $\Sym(\cdot)$ extracts the symmetric part of a square matrix, i.e., $\Sym({\mat D}) = (\mat{D} + \mat{D}^T)/2$. $\mathcal{L}_x (x, \lambda_x)$ is the first-order derivative of $\mathcal{L}(x, \lambda_x)$ and $\D ^2 \mathcal{L}(x, \lambda_x)[\xi_x]$ is the second-order derivative of $\mathcal{L}(x, \lambda_x)$ applied in the direction $\xi_x$, both computed while keeping $\lambda_x$ fixed.

\change{{It should be noted that the least-squares estimate $ {\lambda}_{  x}= \Sym( (\mat{X}^T\mat{BBX})^{-1}\quad \allowbreak (\mat{X}^T \mat{BAX}))$ is the solution to the problem $\argmin_{\lambda \in \mathbb{R}^{r\times r}} \allowbreak \| \mat{AX} - \mat{BX}\lambda  \|_Q ^2$ such that $\lambda$ is symmetric, where $\| \mat{AX} - \mat{BX}\lambda  \|_Q ^2 = \trace((\mat{AX} - \mat{BX}\lambda)^T \mat{Q} (\mat{AX} - \mat{BX}\lambda) )$ and $\mat{Q} = \mat{BX} (\mat{X}^T\mat{BBX})^{-2} \mat{X}^T\mat{B}$. A different estimate of $\lambda_x$ is obtained by solving the problem $\argmin_{\lambda \in \mathbb{R}^{r\times r}} \| \mat{AX} - \mat{BX}\lambda  \|_F ^2$ such that $\lambda$ is symmetric. Both these estimates coincide at a local minimum \changeBM{and} can be used in the neighborhood of a minimum.}}

It is readily checked that the Lagrangian ${\mathcal L}( {x},  {\lambda}_{  x})$ in (\ref{eq:lagrangian_grassmann}) remains unchanged under the action $\mat{X} \mapsto \mat{XO}$ for all $\mat{O} \in \OG{r}$, \changeBM{where $\mat{X} \in \Gstiefel{r}{n}{\mat B}$}. The action $\mat{X} \mapsto \mat{XO}$ also leads to the transformation in the least-squares Lagrange multiplier as $\lambda_x \mapsto \mat{O}^T \lambda_x \mat{O}$. Subsequently, we have the following proposition for constructing a family of Riemannian metrics for the problem (\ref{eq:grassmann_optimization}) on the generalized Grassmann manifold.

\begin{proposition}\label{prop:grassmann}
\changeBMM{There exists a family of Riemannian metrics}
\begin{equation} \label{eq:grassmann_metric_proposition}
\begin{array}{lll}
g_x(\xi_x, \eta_x) = \omega_1\underbrace{\langle \xi_x,\mat{A}	 \eta_x\rangle}_{\rm cost\ related} -  \omega_2\underbrace{\langle \xi_x,\mat{B} \eta_x \lambda_x \rangle}_{\rm constraint\ related}
\end{array}
\end{equation}
\changeBMM{on $\Gstiefel{r}{n}{\mat B}$ with $\omega_1, \omega_2 \in [0, 1]$, each of which induces a Riemannian metric on the quotient manifold $\Gstiefel{r}{n}{\mat B}/ \OG{r}$ in the neighborhood of the minimum of the quadratic optimization problem (\ref{eq:grassmann_optimization}). The parameters $\omega_1$ and $ \omega_2$ weight the cost and constraint, respectively.} $\xi_x$ and $ \eta_x$ are vectors in the tangent space of the constraints at $x = \mat{X}$ such that $\mat{X}^T\mat{BX} = \mat{I}$ and $\lambda_x = \Sym((\mat{X}^T\mat{BBX})^{-1}\allowbreak( \mat{X}^T \mat{B} \mat{A}\mat{X}  ))$ is the least-squares estimate, where $\Sym(\cdot)$ extracts the symmetric part of a square matrix, i.e., $\Sym({\mat D}) \allowbreak= (\mat{D} + \mat{D}^T)/2$. 
\end{proposition}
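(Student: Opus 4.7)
The plan is to verify three defining properties of the claimed family of metrics in turn: (a) the expression (\ref{eq:grassmann_metric_proposition}) arises as the second-order form of the regularized Lagrangian (\ref{eq:regularized_Lagrangian}) specialized to (\ref{eq:generalized_eigenvalue_problem}), so that it is the principled metric candidate prescribed by (\ref{eq:metric_decomposition}); (b) the resulting bilinear form is invariant along the fibers of the action $\mat{X} \mapsto \mat{XO}$, so that it descends via (\ref{eq:metric_quotient}) to a well-defined object on the quotient $\Gstiefel{r}{n}{\mat B}/\OG{r}$; and (c) it is positive definite on the tangent space of $\Gstiefel{r}{n}{\mat B}$ in a neighborhood of the minimum of (\ref{eq:generalized_eigenvalue_problem}), so that the Riemannian submersion construction of Section \ref{sec:Riemannian_framework} actually applies.

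For (a), I would specialize (\ref{eq:regularized_Lagrangian}) to $\mathcal{\tilde L}(x,\lambda_x) = \omega_1 \trace(\mat{X}^T\mat{AX})/2 - \omega_2 \langle \lambda_x, \mat{X}^T\mat{BX} - \mat{I}\rangle/2$ and repeat the computation in (\ref{eq:lagrangian_grassmann}) with the regularization factors carried through; this yields $\D^2 \mathcal{\tilde L}(x,\lambda_x)[\xi_x] = \omega_1 \mat{A}\xi_x - \omega_2 \mat{B}\xi_x \lambda_x$, and pairing with $\eta_x$ in the Euclidean inner product is exactly (\ref{eq:grassmann_metric_proposition}). For (b), the key observation is that under $\mat{X} \mapsto \mat{XO}$ with $\mat{O} \in \OG{r}$ a tangent vector transports as $\xi_x \mapsto \xi_x \mat{O}$ (verified by differentiating the constraint $\mat{X}^T\mat{BX} = \mat{I}$), while the least-squares multiplier transforms as $\lambda_x \mapsto \mat{O}^T \lambda_x \mat{O}$, as already noted in the text just before the proposition. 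Cyclicity of the trace together with $\mat{O}\mat{O}^T = \mat{I}$ then leaves both $\langle \xi_x, \mat{A}\eta_x\rangle$ and $\langle \xi_x, \mat{B}\eta_x \lambda_x\rangle$ unchanged, giving fiber invariance of (\ref{eq:grassmann_metric_proposition}).

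The main obstacle is (c). At a minimum $\mat{X}^*$ of (\ref{eq:grassmann_optimization}) the least-squares multiplier $\lambda_{x^*}$ is (up to a choice of representative) the diagonal matrix of the $r$ smallest generalized eigenvalues of $(\mat{A},\mat{B})$. Theorem \ref{thm:connection} tells us that the unregularized quadratic form (with $\omega_1 = \omega_2 = 1$) captures the Hessian of $f$ on the horizontal space exactly, but is degenerate along the vertical direction; this is precisely the issue the regularization is meant to address. Allowing $\omega_1$ or $\omega_2$ to dip into $[0,1)$ according to whether $\mat{A}$ contributes convexity or concavity at $\mat{X}^*$ (as in the two cases of Section \ref{sec:convexity}) lifts the degeneracy on $\mathcal{V}_x$ while preserving positivity on $\mathcal{H}_x$. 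I would make this precise by expressing (\ref{eq:grassmann_metric_proposition}) in a basis that simultaneously diagonalizes $\lambda_{x^*}$ and the constraint-normal directions, which decouples the vertical and horizontal blocks and reduces positive-definiteness to a simple sign check on the regularized eigenvalues, and then appealing to continuity to extend positive-definiteness from $\mat{X}^*$ to a neighborhood. Combined with the fiber invariance from (b) and the choice of $\mathcal{H}_x$ as the $g_x$-orthogonal complement of $\mathcal{V}_x$, this produces the claimed Riemannian structure on $\Gstiefel{r}{n}{\mat B}/\OG{r}$.
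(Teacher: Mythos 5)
Your proposal follows essentially the same route as the paper's proof: verify invariance of the bilinear form under $\mat{X}\mapsto\mat{XO}$, $\xi_x\mapsto\xi_x\mat{O}$, $\lambda_x\mapsto\mat{O}^T\lambda_x\mat{O}$ so that (\ref{eq:metric_quotient}) applies, and then exhibit positive definiteness on $T_x\Gstiefel{r}{n}{\mat B}$ near the minimum for a suitable sub-family of weights. The only difference is one of detail: the paper treats just the case $\mat{A}\succ 0$ with $\omega_1=1$, $\omega_2=\omega\in[0,1)$ and simply asserts positive definiteness, whereas you sketch a concrete diagonalization/sign-check argument (which is sound, since at $x^*$ the second-order necessary condition makes the Lagrangian Hessian positive semidefinite on the tangent space and degenerate exactly along $\mathcal{V}_{x^*}$) and also cover the concave case of Section \ref{sec:convexity}.
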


\begin{proof}

First, we show that the metrics (\ref{eq:grassmann_metric_proposition}) respect the symmetry condition in (\ref{eq:metric_quotient}). In order to show that the metric does not change along the equivalence class $[x] = [{\mat X}] = \{ \mat{XO}: \mat{O} \in \OG{r} \}$ for all $\mat{O} \in \OG{r} $, it is equivalent, but simplified following \citep[Proposition~3.6.1]{absil08a},  to show that the metric for tangent vectors ${\xi}_{  x}$ and $ \eta_x$ does not change under the transformations $\mat{X} \mapsto \mat{XO}$, $\eta_{x}, \mapsto \eta_{x} \mat{O}$, and $\xi_{x} \mapsto \xi_{x} \mat{O}$. These \changeBM{transformations} lead to the transformation in the least-squares Lagrange multiplier as $\lambda_x \mapsto \mat{O}^T \lambda_x \mat{O}$. Finally, a few extra computations show that the metrics (\ref{eq:grassmann_metric_proposition}) indeed remain the same under the mentioned transformations.

Second, we show the construction of one particular family of Riemannian metrics. To this end, consider the case when $\mat{A} \succ 0$. Consider the parameters $\omega_1 = 1$ and $\omega_2 = \omega$. Restricting $\omega \in [0, 1)$ guarantees that in the neighborhood of a minimum $g_x(\zeta_x , \zeta_x) > 0$ for all $\zeta_x$ in the tangent space, satisfying the criterion of positive definiteness. 

The proofs for symmetry compatibility and positive definiteness on the tangent space \changeBMM{conclude} the proof of the proposition.

\end{proof}



\begin{table}[H]
\begin{center} \small
\begin{tabular}{ p{4.5cm} | p{8cm} }
&  
$
\begin{array}[t]{lll}
\begin{array}{lll}
\min\limits_{\mat{X} \in \mathbb{R}^{n\times r}}  & \trace(\mat{X}^T\mat{AX})  /2\\
\subject & \mat{X}^T \mat{BX} = \mat{I}
\end{array}
\end{array}
$
\\
\hline
& \\
$\begin{array}[t]{lll} {\rm Matrix\ representation}\\
{\rm of\ an\ element\ } x \in \mathcal{M} \end{array}$& $\begin{array}[t]{lll} x = \mat{X}\end{array}$\\
& \\
$\begin{array}[t]{lll} {\rm  Computational\  space\  }  {\mathcal M} \end{array}$&
$
\begin{array}[t]{ll}
\Gstiefel{r}{n}{\mat B} = \{\mat{X} \in \mathbb{R}^{n \times r}: \mat{X}^T\mat{BX} = \mat{I} \}
\end{array}
$
\\
& \\
$\begin{array}[t]{lll} {\rm Group\ action }\end{array}$ &
$\begin{array}[t]{lll}\mat{XO}, \end{array}\ \forall\mat{O} \in  \OG{r} {\rm \ such\ that\ } \mat{O}^T{\mat O} = \mat{OO}^T = \mat{I}$ 
\\
& \\
$\begin{array}[t]{lll} {\rm Quotient\ space\ }  \end{array}$ &  
$
\begin{array}[t]{lll}
\Gstiefel{r}{n}{\mat B} /  \OG{r} 
\end{array}
$
\\
& \\
$
\begin{array}[t]{ll}
{\rm Tangent\ vectors\ in\ } 
T_{  x} {\mathcal M}
\end{array}
$ & $
\begin{array}[t]{lll}
\{  {\xi}_{  x}  \in  
\mathbb{R}^{n \times r}:  {\xi}_{  x}^T\mat{BX} + \mat{X}^T \mat{B}  {\xi}_{  x} = 0  \}
\end{array}
$
 \\
 & \\
$
\begin{array}[t]{lll}
{\rm Metric \ } {g}_{  x} ( {\xi}_{  x},   {\zeta}_{  x}) \\
{\rm for \ }  {\xi}_{  x},  {\zeta}_{  x} \in T_{  x}  {\mathcal M}
\end{array}
$
& 
$
\begin{array}[t]{lll}
g_{  x} ( {\xi}_{  x},  {\zeta}_{  x} ) 
 =  \omega_1\trace( {\zeta}_{  x} ^T \mat{A}  {\xi}_{  x})  \\
\quad \quad \quad \quad \quad \quad  - \omega_2\trace( {\zeta}_{  x} ^T \mat{B}  {\xi}_{  x} \lambda_x) \\
\\
{\rm or\ the\ metrics\ proposed\ in\ Section\ } \ref{sec:metric_tuning_grassmann},\\
{\rm where\ }  {\lambda}_{  x} =  \Sym((\mat{X}^T\mat{BBX})^{-1}(\mat{X}^T \mat{BAX})) \\
\end{array}
$\\

& \\
$
\begin{array}[t]{lll}
 {\rm Cost\ function}
\end{array}
$

&
$
\begin{array}[t]{ll}
 {f}(  x) = \trace(\mat{X}^T\mat{AX})  /2
\end{array}
$
\\
& \\
$
\begin{array}[t]{lll}
{\rm First}\mbox{-}{\rm order\ derivative\ of }\\
 {f}(  x)
\end{array}
$

&
$
\begin{array}[t]{ll}
 {f}_x (  x) = \mat{AX}
\end{array}
$
\\
& \\
$
\begin{array}[t]{lll}
{\rm Search\ direction\ }
\end{array}
$

&
$
\begin{array}[t]{ll}
   \argmin\limits_{ {\zeta}_{  x} \in T_{  x} {\mathcal M}} \quad {f}(  x) + \langle  {f}_x(  x),  {\zeta}_{  x}\rangle +\frac{1}{2}  {g}_{  x}( {\zeta}_{  x},  {\zeta}_{  x})\\
\end{array}
$
\\
& \\
$
\begin{array}[t]{lll}
{\rm Retraction\ }  {R}_{  x}( {\xi}_{  x}){\rm \ that } \\

{\rm maps\ a\ search\ direction\ } {  \xi}_{  x}\\
{\rm onto\ }  {\mathcal M}
\end{array}
$
&
$
\begin{array}[t]{lll}
\mat{UV}^T,\\
\mbox{where }  \mat{X} + {\xi}_{  x} = \mat{U\Sigma V}^T \mbox{ is the restriction of thin SVD}\\ 
\mbox{such that } \mat{U}^T \mat{BU} = \mat{I},\mat{V}^T\mat{V} = \mat{I}, \mbox{ and }\\
 \mat{\Sigma} \mbox{ is a positive diagonal matrix.}
\end{array}
$
\\
 & \\
 \hline
\end{tabular}
\end{center} 
\caption{Optimization-related ingredients for computing the extreme eigenvalues of $\mat{B}^{-1} \mat{A}$. The numeric complexity per iteration depends on solving the quadratic programming problem for the search direction computation. In many instances exploiting sparsity in matrices $\mat{A}$ and $\mat B$ leads to numerically efficient schemes. Here $\Sym(\cdot)$ extracts the symmetric part of a square matrix, i.e., $\Sym({\mat D}) = (\mat{D} + \mat{D}^T)/2$. Few choices of the regularizing parameters $\omega_1, \omega_2 \in [0, 1]$ for relevant situations are shown in Section \ref{sec:metric_tuning_grassmann}.}
\label{tab:grassmann} 
\end{table}

\subsection{Metric tuning and shift policies}\label{sec:metric_tuning_grassmann}
Due to the quadratic nature of both cost and constraints, the metric (\ref{eq:grassmann_metric_proposition}) has the appealing feature of being parameterized by the Lagrangian parameter $\lambda_x$. This object is low dimensional when $r \ll n$. It provides an interesting interpretation of various ``shift'' policies developed in numerical linear algebra for eigenspace computations \citep[Chapter~8]{golub96a}. \change{We further particularize the selection of regularization parameters $\omega_1$ and $\omega_2$ in (\ref{eq:grassmann_metric_proposition}) when $\mat{A} \succ 0$ and when $\mat{A} \not \succ 0$.} In both these cases, we propose metrics \change{that} connect to a number of classical algorithms for the generalized eigenvalue problem \citep{absil02a, absil04c, absil10a}.


\subsubsection*{When $\mat{A} \succ 0$}
This instance falls under Case I of Section \ref{sec:weight}, i.e., with $\omega_1 = 1$ and $\omega_2 = \omega$ in (\ref{eq:metric_decomposition}). Therefore, the family of proposed Riemannian metrics has the structure
\begin{equation}\label{eq:inverse_iteration_metric}
\begin{array}{lr}
 {g}_{  x}( {\xi}_{  x},  {\zeta}_{  x}) = \trace( {\xi}_{  x}^T \mat{A}  {\zeta}_{  x}) - \omega  \trace( {\xi}_{  x} ^T \mat{B}  {\zeta}_{  x} \lambda_x), 
 \end{array}
\end{equation}
where $ {\xi}_{  x}$ and $ {\zeta}_{  x} $ are vectors in the tangent space of \changeBMM{the} constraints, the least-squares Lagrange multiplier $\lambda_x = \Sym((\mat{X}^T\mat{BBX})^{-1}(\mat{X}^T \mat{BAX}))$, and $\omega =[0, 1)$. 

The metric (\ref{eq:inverse_iteration_metric}) provides two insightful connections to the literature. First, the proposed metric (\ref{eq:inverse_iteration_metric}) with $\omega = 0$ generalizes the well-known \emph{inverse iteration algorithm} for computing the smallest eigenvalues of a symmetric matrix \citep[Section~8.2.2]{golub96a}. For the case when $\mat{B} = \mat{I}$, the \emph{negative} Riemannian gradient with the metric (\ref{eq:inverse_iteration_metric}) and $\omega = 0$ is computed as in Table \ref{tab:grassmann} as 
\[
\left .
\begin{array}{lll}
\argmin\limits_{\substack{\zeta_x \in \mathbb{R}^{n \times r} \\
\zeta_x ^T \mat{X} + \mat{X}^T \zeta_x = 0   } } \quad \langle \mat{AX}, \zeta_x \rangle + \frac{1}{2} \trace(\zeta_x ^T \mat{A} \zeta_x)
\end{array}
\right \} = \mat{A}^{-1}\mat{X}(\mat{X}^T \mat{A}^{-1} \mat{X})^{-1} - \mat{X}.
\]
Given an iterate $x = \mat{X}$ such that $\mat{X}^T \mat{X} = \mat{I}$, the Riemannian steepest-descent update with \emph{unit step-size}, is $ {x}_+  =  {R}_{  x}(\mat{A}^{-1}\mat{X}\allowbreak(\mat{X}^T \mat{A}^{-1} \mat{X})^{-1}\allowbreak - \mat{X})$, which is equivalent \changeBMM{to the update $ x_+ = {\rm qf}(\mat{A}^{-1}\mat{X})$} \changeBMM{on the Grassmann manifold}, where $R_x(\cdot)$ is the retraction operator defined in Table \ref{tab:grassmann} and ${\rm qf}(\mat{A}^{-1}\mat{X} )$ is the Q-factor of the QR decomposition of $\mat{A}^{-1}\mat{X}$. This is also the classical inverse iteration update \citep[Section~8.2.2]{golub96a}. This shows that the inverse iteration has the interpretation of a Riemannian steepest-descent algorithm with the metric (\ref{eq:indefinite_power_iteration_metric}) for $\omega = 0$.


A second insight is obtained for the case when $\omega$ is updated with iterations. \changeBM{The} Riemannian steepest-descent algorithm with the metric (\ref{eq:inverse_iteration_metric}) generalizes the popular Rayleigh quotient iteration algorithm (see \citealp[Section~8.2.3]{golub96a}, \citealp[Algorithm~4.2]{absil02a}, \citealp{absil04c}). \change{Given an iterate $x = \mat{X}$ such that $\mat{X}^T \mat{X} = \mat{I}$, \citet[Algorithm~4.2]{absil02a} propose the \emph{Grassmann-Rayleigh quotient iteration} (GRQI) algorithm for computing the next update $x_+$ with the steps
\begin{equation}\label{eq:absil_grqi}
\left.
\begin{array}{lll}
\mat{A}\mat{Z} - \mat{Z} \mat{X}^T \mat{AX} & =&  \mat{X} \quad {\rm (we\ solve\ for\ } \mat{Z}) \\
x_+   &=& {\rm qf}(\mat{Z} ).
\end{array}
\right \}
\end{equation}
}

To show that the Riemannian steepest-descent algorithm with the metric (\ref{eq:inverse_iteration_metric}) generalizes GRQI algorithm (\ref{eq:absil_grqi}), we consider again the case  when $\mat{B} = \mat{I}$. At each iteration of the Riemannian steepest-descent algorithm with the metric (\ref{eq:inverse_iteration_metric}), we are required to solve the system of linear equations (by looking at the optimality conditions of the quadratic program for computing the search direction) for $\zeta_{x} \in \mathbb{R}^{n \times r}$ and an auxiliary variable $\mu_{x} \in \mathbb{R}^{r\times r}$ \change{($\mu_x$ is the Lagrange multiplier corresponding to the constraint $\zeta_x ^T \mat{X} + \mat{X}^T \zeta_x = 0$)} of the form
\begin{equation}\label{eq:linear_system_grqi}
\begin{array}{llll}

& \left .
\begin{array}{lll}
\argmin\limits_{\substack{\zeta_x \in \mathbb{R}^{n \times r} \\
\zeta_x ^T \mat{X} + \mat{X}^T \zeta_x = 0   } } \quad \langle \mat{AX}, \zeta_x \rangle + \frac{1}{2} (\trace(\zeta_x ^T \mat{A} \zeta_x) - \omega  \trace( {\zeta}_{  x} ^T {\zeta}_{  x} \lambda_x))
\end{array}
\right \} \\

\\
\Rightarrow  & \left \{
\begin{array}{lll}
\mat{A} \zeta_x^* - \omega \zeta_x^* \lambda_x = \mat{X} \mu_x^* - \mat{AX}\\
\mat{X}^T \zeta_x^* +{ \zeta_x^*} ^T \mat{X} = 0,
\end{array}
\right.

\end{array}
\end{equation}
where $\omega \in [0, 1)$, \changeBMM{$\zeta_{x}^*$} is the computed search direction, and \changeBMM{$\mu_{x}^*$} is the \change{Lagrange multiplier} that guarantees that the search direction \changeBMM{$\zeta_{x}^*$} belongs to the tangent space of the constraints. It should be noted that the linear system of equations (\ref{eq:linear_system_grqi}) can be solved efficiently by exploiting additional sparsity structure in $\mat{A}$ \citep{absil02a}. Once the solution for \changeBM{\changeBMM{$\mu_{x}^*$} in (\ref{eq:linear_system_grqi}) is obtained by eliminating the constraint $\mat{X}^T \zeta_x^* +{ \zeta_x^*} ^T \mat{X} = 0$}, the Riemannian steepest-descent update $x_+$ with unit step-size is 
\begin{equation}\label{eq:update_grqi}
\begin{array}{llll}
& \left .
\begin{array}{lll}
\mat{A}\zeta_x^* - \omega \zeta_x^* \lambda_x  &= & \mat{X}\mu_x^* - \mat{AX} \\
x_+  & = &   R_x( \zeta_x^* ) \equiv  {\rm qf}(\mat{X} + \zeta_x^* ) \\
\end{array}
\right \} \\
\\

\Leftrightarrow & \left \{
\begin{array}{lll}
\mat{A}\mat{Z} - \omega\mat{Z} \lambda_x & =&  \mat{X}(\mu_x^* - \omega \lambda_x) \\
x_+   &=& {\rm qf}(\mat{Z} ),
\end{array}
\right .

\\
\\
\Leftrightarrow & \left \{
\begin{array}{lll}
\mat{A}\mat{Z} - \omega\mat{Z} \mat{X}^T\mat{AX} & =&  \mat{X}(\mu_x^* - \omega \lambda_x) \quad {\rm (we\ solve\ for\ } \mat{Z})\\
x_+   &=& {\rm qf}(\mat{Z} ),
\end{array}
\right .
\end{array}
\end{equation}
where $ \mat{Z} := \mat{X} + \zeta_x^* $, $R_x(\cdot)$ is the retraction operation defined in Table \ref{tab:grassmann}, and ${\rm qf}(\mat{Z})$ is the Q-factor of \change{the} QR decomposition of $\mat{Z}$. \changeBMM{It should be noted that $R_x( \zeta_x^* )$ and $ {\rm qf}(\mat{X} + \zeta_x^* )$ define the same element on the Grassmann manifold and hence the equivalence in (\ref{eq:update_grqi}). It should also be emphasized that the update (\ref{eq:update_grqi}) is similar to (\ref{eq:absil_grqi}) in the neighborhood of a minimum up to the extra terms $\omega$ and $\mu_x^* - \omega \lambda_x$. The similarity of (\ref{eq:update_grqi}) and (\ref{eq:absil_grqi}) is more profound for $r=1$ and when $\omega$ is updated, e.g., using (\ref{eq:update_omega}).} This connection also shows a way to extend GRQI beyond the neighborhood of a minimum. Consequently, GRQI update (\ref{eq:absil_grqi}) proposed by \citet{absil02a} has the interpretation of a Riemannian steepest-descent algorithm with the metric from \change{(\ref{eq:inverse_iteration_metric})}.


\subsubsection*{When $\mat{A} \not\succ 0$}
Consider first the case when $\mat{A} \prec 0$ that falls under Case II of Section \ref{sec:weight}, i.e., with $\omega_1 = \omega$ and $\omega_2 =1$ in (\ref{eq:metric_decomposition}), suggesting (locally) a family of Riemannian metrics \changeBM{that} has the form
\begin{equation}\label{eq:power_iteration_metric}
\begin{array}{lr}
 {g}_{  x}( {\xi}_{  x},  {\zeta}_{  x}) = \omega \trace( {\xi}_{  x}^T \mat{A}  {\zeta}_{  x}) -\trace( {\xi}_{  x}^T \mat{B}  {\zeta}_{  x}  {\lambda}_{  x}),\\
 \end{array}
\end{equation}
where $ {\xi}_{  x}$ and $ {\zeta}_{  x} $ are vectors in the tangent space of \changeBM{the} constraints and $\omega\in[0, 1)$. The expression for the least-squares Lagrange multiplier from (\ref{eq:least_square_lambda}) is $\lambda_x = \Sym((\mat{X}^T\mat{BBX})^{-1}\allowbreak(\mat{X}^T \mat{BAX}))$, where $\Sym(\cdot)$ extracts the symmetric part of a square matrix, i.e., $\Sym({\mat D}) = (\mat{D} + \mat{D}^T)/2$.

It should be noted that as $-\lambda_x$ is only guaranteed to be positive definite locally in the neighborhood of a local minimum, the metric characterization (\ref{eq:power_iteration_metric}) is a Riemannian metric \emph{only} in the neighborhood of a local minimum. In order to extend the metric away from a local minimum, we modify the metric, defined in (\ref{eq:power_iteration_metric}), by replacing $-\lambda_x$ with $( {\lambda}_{x}^T {\lambda}_{x})^{1/2}$ resulting in the modified metric
\begin{equation}\label{eq:indefinite_power_iteration_metric}
\begin{array}{lr}
 {g}_{  x}( {\xi}_{  x},  {\zeta}_{  x}) = \omega \trace( {\xi}_{  x}^T \mat{A}  {\zeta}_{  x}) +\trace( {\xi}_{  x}^T \mat{B}  {\zeta}_{  x} ( {\lambda}_{x}^T {\lambda}_{x})^{1/2}),
\end{array}
\end{equation}
where $( {\lambda}_{x}^T {\lambda}_{x})^{1/2}$ is the matrix square root of ${\lambda}_{x}^T {\lambda}_{x} $ that is \changeBMM{well defined} as long as $\lambda_x $ is full rank. \change{The full-rank assumption of $\lambda_x$ is required  for the metric, from (\ref{eq:indefinite_power_iteration_metric}), to be a smooth inner product.} The modified metric, shown in (\ref{eq:indefinite_power_iteration_metric}), is also a good metric candidate for the case when $\mat A$ is symmetric indefinite since $( {\lambda}_{x}^T {\lambda}_{x})^{1/2}$ is also positive definite in this case.

The proposed metric in (\ref{eq:indefinite_power_iteration_metric}) with $\omega = 0$ generalizes the well-known \emph{power iteration algorithm} for computing the dominant eigenvalues of a matrix \citep[Section~8.2.1]{golub96a}. To show this consider the case $\mat{B} = \mat{I}$. Given an iterate $x = \mat{X}$ such that $\mat{X}^T \mat{X} = \mat{I}$, the update of the Riemannian steepest-descent algorithm \changeBMM{with} unit step-­size has the characterization (after a few computations) that is equivalent to $x_+ = {\rm qf}(\mat{X}(\mat{I}  + {\lambda}_{x}( {\lambda}_{x}^T {\lambda}_{x})^{-1/2})  - \mat{AX} ( {\lambda}_{x}^T {\lambda}_{x})^{-1/2} )$. Locally, in the neighborhood of a minimum, $\mat{I}  + {\lambda}_{x}( {\lambda}_{x}^T {\lambda}_{x})^{-1/2} \approx 0$, and therefore, the equivalent update is \changeBMM{$x_+ = {\rm qf}(\mat{AX})$} which is the classical power iteration update \citep[Section~8.2.1]{golub96a}. In other words, the power algorithm has the interpretation of a Riemannian steepest-descent algorithm with the metric from (\ref{eq:indefinite_power_iteration_metric}) and with $\omega = 0$. Similarly, the steepest-descent algorithm with a shifted version of the metric (\ref{eq:indefinite_power_iteration_metric}), i.e., for $\omega$ updated with iterations, generalizes the GRQI algorithm proposed by \citet{absil02a}.

A similar insight still holds when the quadratic cost is generalized to a strictly concave function, i.e., minimizing a concave cost (or maximizing a convex cost) with orthogonality constraints. For the metric with $\omega = 0$, i.e., taking only the constraint-related term, this is the essence of the \emph{generalized power method} proposed by \citet{journee10b}.

\subsection{A numerical illustration}\label{sec:grassmann}

\begin{figure*}[t]
\center
\includegraphics[scale = 0.28]{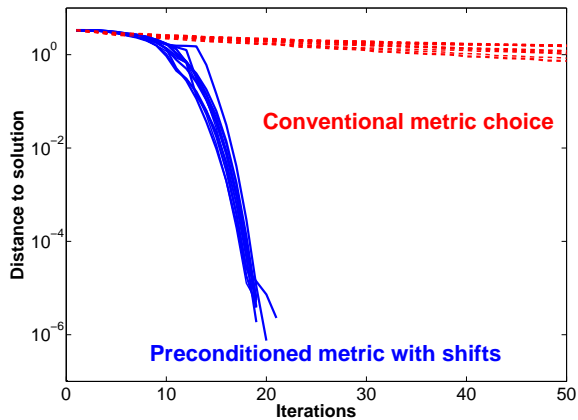}
\caption{Benefits of the proposed metric (\ref{eq:inverse_iteration_metric}) for the generalized eigenvalue problem to compute the extreme $5$-dimensional subspace (corresponding to the smallest $5$ eigenvalues) of the matrix pencil $(\mat{A}, \mat{B})$ of size $500 \times 500$. The problem instance is described in Section \ref{sec:grassmann}. Shown are $10$ runs of the Riemannian steepest-descent algorithms with random initializations for the problem instance. The distance to the solution is defined as the square root of the sum of \change{squared} canonical angles between the current subspace and the dominant $5$-dimensional subspace of $\mat{B}^{-1}\mat{A}$.}
\label{fig:grassmann}
\end{figure*}

As a numerical comparison, we consider the example proposed by \citet[Section~8]{manton02a}. $\mat{A}$ is a diagonal matrix of size $500 \times 500$ with entries \changeBM{equispaced} on the interval $[10, 11]$. $\mat{B}$ is chosen as the identity matrix of size $500 \times 500$. In Figure \ref{fig:grassmann}, we seek to compute the $r=5$ smallest eigenvalues of $\mat{B}^{-1}\mat{A}$. The algorithms compared are the Riemannian steepest-descent algorithms with the Euclidean metric (\ref{eq:grassmann_metric_standard}) and the preconditioned Riemannian metric in (\ref{eq:inverse_iteration_metric}) with the $\omega$-updating procedure (\ref{eq:update_omega}). Both the algorithms are stopped when either the norm of the gradient is below $10^{-8}$ or when they complete $500$ iterations. Distances of the iterates to the solution \change{are} plotted for the algorithms. The distance of an iterate $\mat{X}$ to the solution $\mat{X}_{\rm opt}$ is defined as the square root of the sum of canonical angles between $\mat{X}$ and $\mat{X}_{\rm opt}$. In Matlab it is computed using the command \texttt{norm(acos(svd(orth(X)'*orth(Xopt))))}. Figure \ref{fig:grassmann} \changeBMM{shows the initial $50$ iterations, where we see} that tuning the metric to the problem structure leads to improved performance.


\section{Quadratic optimization with rank constraints}\label{sec:quadratic_lowrank}

This class of problems has met with considerable interest in recent years. Applications include collaborative filtering \citep{rennie05a}, multivariate linear \changeBMM{classification} \citep{amit07a}, dimensionality reduction \citep{cai07a}, learning of low-rank distances \citep{kulis09a, meyer11c, mishra11b}, filter design problems \citep{manton02a}, model reduction in dynamical systems \citep{benner13a, li04a, vandereycken10a}, sparse principal components analysis \citep{burer03a, journee10a}, computing maximal cut of a graph \citep{burer03a, journee10a}, and low-rank matrix completion \citep{keshavan10a, ngo12a, boumal11a, mishra14a, vandereycken13a}, to name just a few. 

In all those applications, the discussion in Section \ref{sec:connection} allows us to propose novel preconditioned Riemannian metrics. These metrics connect to those proposed by \citet{mishra12a, mishra14d,ngo12a, mishra14c} for specific optimization problems.




A popular way to characterize the set of fixed-rank matrices is through fixed-rank matrix factorizations. Most matrix factorizations have symmetry properties that make them \changeBMM{non-unique}. And in many cases the set of rank $r$ of $n\times m$ matrices $\mathbb{R}_r^{n \times m}$ is identified with structured (smooth and differentiable) quotient spaces \citep{mishra14a, meyer11b,absil14a}. Figure \ref{fig:factorizations} shows three different fixed-rank matrix factorizations and the quotient manifold structure of the set $\mathbb{R}_r^{n \times m}$.


To identify Riemannian metrics on the low-rank manifold $\mathbb{R}_r^{n \times m}$, we consider minimization of a convex quadratic cost function. Specifically, we focus on the parameterization $\mat{X} = \mat{GH}^T$, where $\mat{X} \in \mathbb{R}_r ^{n \times m}$, $\mat{G} \in \mathbb{R}_*^{n \times r}$ (the set of full column rank matrices), and $\mat{H} \in \mathbb{R}_*^{m \times r}$. Other fixed-rank matrix factorizations are dealt with similarly.

Consider the optimization problem
\begin{equation}\label{eq:lowrank_optimization_problem}
\begin{array}{lll}
\min\limits_{\mat{X} \in \mathbb{R}_r ^{n \times m}} &  \frac{1}{2} \trace(\mat{X}^T \mat{A} \mat{X} \mat{B}) + \trace(\mat{X}^T \mat{C}), \\
\end{array}
\end{equation}
where $\mat{A} \succ 0 $ of size $n\times n$, $\mat{B} \succ 0 $ of size $m\times m$, and  $\mat{C} \in \mathbb{R}^{n \times m}$. Positive definiteness of $\mat{A}$ and $\mat{B}$ implies that the cost function is bounded from below and is \emph{convex} in $\mat{X}$. Invoking the low-rank parameterization $\mat{X} = \mat{GH}^T$, shown in Figure \ref{fig:factorizations}, the problem (\ref{eq:lowrank_optimization_problem}) translates to 
\begin{equation*}\label{eq:lowrank_optimization_problem_quotient}
\begin{array}{lll}
\min\limits_{(\mat{G}, \mat{H}) \in \mathbb{R}^{n \times r} \times \mathbb{R}^{m \times r}} &  \frac{1}{2} \trace(\mat{H} \mat{G}^T\mat{AGH}^T\mat{B} ) + \trace (\mat{H} \mat{G}^T \mat{C}) \\
\subject & [(\mat{G}, \mat{H})] \in \mathbb{R}_*^{n \times r} \times \mathbb{R}_*^{m \times r} / \GL{r},
\end{array}
\end{equation*}
where the equivalence class $[(\mat{G}, \mat{H})] := \{ (\mat{GM}^{-1}, \mat{HM}^T):  \mat M \in \GL{r} \}$ and $\GL{r}$ is the set of $r\times r$ square matrices of non-zero determinant.

\begin{figure}[t]
	\centering
	\includegraphics[scale = 0.60]{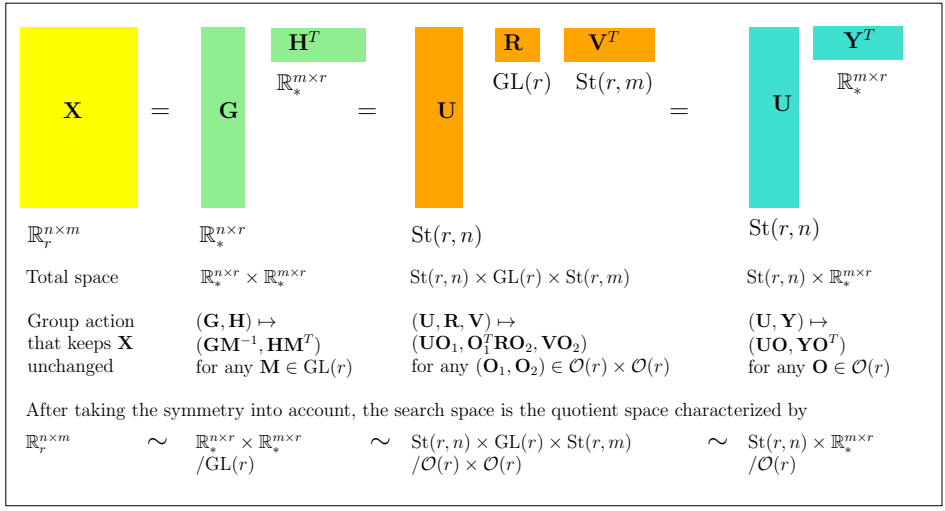}
	\caption{Fixed-rank matrix factorizations lead to quotient search spaces due to their intrinsic symmetries. The pictures emphasize the situation of interest, i.e., the rank $r$ is small compared to the matrix dimensions. $\Stiefel{r}{n}$ is the set of $n\times r$ matrices with orthogonal columns, $\mathbb{R}_*^{n\times r}$ is the set of $n\times r$ matrices with full column rank, $\GL{r}$ is the set of $r\times r$ square matrices with non-zero determinant, and $\OG{r}$ is the set of $r\times r$ square matrices with \changeBMM{orthonormal} columns and rows.}
	\label{fig:factorizations}
\end{figure}

A conventional way to handle this symmetry in the Riemannian framework is  endowing the set $\mathbb{R}_*^{n\times r}$ with the \emph{natural metric} \citep[Section~3.6.4]{absil08a}. Since the computational space $ {\mathcal{M}}$ is the product space $\mathbb{R}_*^{n \times r} \times \mathbb{R}_*^{m \times r}$, the natural metric is 
\begin{equation*}\label{eq:lowrank_metric_standard}
\begin{array}{lll}
g_x(\eta_x, \xi_x) = \trace((\mat{G}^T\mat{G})^{-1} \eta_{\mat G}^T \xi_{\mat G}) + \trace((\mat{H}^T\mat{H})^{-1} \eta_{\mat H}^T \xi_{\mat H}),
\end{array}
\end{equation*}
where $x$ has the matrix representation $(\mat{G}, \mat{H}) \in \mathbb{R}_*^{n \times r} \times \mathbb{R}_*^{m\times r}$ and $\xi_x, \eta_x$ are vectors belonging to the tangent space $\mathbb{R}^{n\times r}\times \mathbb{R}^{m \times r}$, i.e., $\xi_x$ has the matrix representation $(\xi_{\mat G}, \xi_{\mat H})\in \mathbb{R}^{n\times r} \times 
\mathbb{R}^{m \times r}$.

In contrast, we follow the developments in Section \ref{sec:convexity} to propose a family of metrics that takes the problem structure into account by exploiting the structure of the regularized Lagrangian (\ref{eq:regularized_Lagrangian}). Since the set $\mathbb{R}_*^{n \times r} \times \mathbb{R}_*^{m\times r}$ is an open subset of the space $\mathbb{R}^{n \times r} \times \mathbb{R}^{m \times r}$, the Lagrangian only consists of the cost function, i.e.,
\begin{equation}\label{eq:lagrangian_lowrank}
\begin{array}{lrll}
&  {\mathcal L}( {x}) & = & \trace(\mat{H} \mat{G}^T\mat{AGH}^T\mat{B} )/ 2 + \trace (\mat{H} \mat{G}^T \mat{C}) \\
\Rightarrow &  {\mathcal L}_x( {x})  & =
& (\mat{AGH}^T\mat{BH} + \mat{CH} , \mat{BHG}^T\mat{AG} + \mat{C}^T\mat{G})\\
\Rightarrow &  \D ^2 {\mathcal L}( {x}) [\xi_x] &=
& (\mat{A}\xi_{\mat G} \mat{H}^T\mat{BH} + 2\mat{AG}\Sym({\mat H}^T \mat{B} \xi_{\mat H}) + \mat{C}\xi_{\mat H}, \\
& & & \mat{B}\xi_{\mat H} \mat{G}^T\mat{AG} + 2\mat{BH}\Sym({\mat G}^T \mat{A} \xi_{\mat G}) + \mat{C}^T\xi_{\mat G} ),
\end{array}
\end{equation}
where $x$ has the matrix representation $(\mat{G}, \mat{H}) \in \mathbb{R}_*^{n \times r} \times \mathbb{R}_*^{m\times r}$, $\xi_x$ has the matrix representation $(\xi_{\mat G}, \xi_{\mat H})\in \mathbb{R}^{n\times r} \times 
\mathbb{R}^{m \times r}$, $\mathcal{L}_x (x)$ is the first-order derivative of $\mathcal{L}(x)$, $\D ^2 \mathcal{L}(x)[\xi_x]$ is the second-order derivative of $\mathcal{L}(x)$ applied in the direction $\xi_x$, and $\Sym(\cdot)$ extracts the symmetric part of a square matrix, i.e., $\Sym(\mat D) = (\mat{D}^T + \mat{D})/2$. 

It is readily checked that the Lagrangian $\mathcal{L}(x)$ in (\ref{eq:lagrangian_lowrank}) remains unchanged under the transformation $(\mat{G},\mat{H}) \mapsto (\mat{GM}^{-1}, \mat{HM}^T)$ for all $\mat{M} \in \GL{r}$. Subsequently, we have the following proposition for constructing a family of Riemannian metrics for (\ref{eq:lowrank_optimization_problem}) on the fixed-rank quotient manifold.

\begin{proposition}\label{prop:lowrank}
\changeBMM{There exists a family of Riemannian metrics}
\begin{equation}\label{eq:lowrank_metric_Hessian}
\begin{array}{rll}
 {g}_{  x} ( {\xi}_{  x},  {\eta}_{  x} )= &   \omega_1\langle   {\eta}_{\mat G},   \mat{A}\xi_{\mat G} \mat{H}^T\mat{BH} \rangle\\
 & +  \omega_2\langle     {\eta}_{\mat G}, 2\mat{AG}\Sym({\mat H}^T \mat{B} \xi_{\mat H}) + \mat{C}\xi_{\mat H} \rangle  \\
  & + \omega_3\langle  {\eta}_{\mat H},\mat{B}\xi_{\mat H} \mat{G}^T\mat{AG} \rangle\\
   & + \omega_4 \langle  {\eta}_{\mat H}, 2\mat{BH}\Sym({\mat G}^T \mat{A} \xi_{\mat G}) + \mat{C}^T\xi_{\mat G} \rangle, \\ 

\end{array}
\end{equation}
\changeBMM{on $\mathbb{R}_*^{n \times r} \times \mathbb{R}_*^{m \times r}$ with $\omega_1, \omega_2, \omega_3,\omega_4 \in [0, 1]$, each of which induces a Riemannian metric on the quotient manifold $\mathbb{R}_*^{n \times r} \times \mathbb{R}_*^{m \times r} / \GL{r}$ in the neighborhood of the local minimum of (\ref{eq:lowrank_optimization_problem}).} Here ${x} = (\mat{G}, \mat{H}) \in  \mathcal{M}$,  $\mathcal{M} = \mathbb{R}_*^{n \times r} \times \mathbb{R}_*^{m \times r} $, and ${\xi}_{  x}, \eta_x$ are vectors in the tangent space $T_x \mathcal{M}$.
\end{proposition}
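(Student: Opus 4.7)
The plan is to follow the two-step template used in the proof of Proposition \ref{prop:grassmann}: first, verify that the bilinear form (\ref{eq:lowrank_metric_Hessian}) descends to a well-defined inner product on the quotient manifold by checking invariance under the $\GL{r}$-action, and second, exhibit a concrete choice of the weights $\omega_1,\ldots,\omega_4$ for which the resulting bilinear form is positive definite on the total tangent space, at least in a neighborhood of the minimum.

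For the invariance step, I would parameterize the action explicitly. Under the group map $(\mat G,\mat H)\mapsto(\mat G\mat M^{-1},\mat H\mat M^T)$ with $\mat M\in\GL{r}$, tangent vectors transform linearly as $(\xi_{\mat G},\xi_{\mat H})\mapsto(\xi_{\mat G}\mat M^{-1},\xi_{\mat H}\mat M^T)$, and similarly for $(\eta_{\mat G},\eta_{\mat H})$. By \citep[Proposition~3.6.1]{absil08a}, it suffices to check that each of the four summands in (\ref{eq:lowrank_metric_Hessian}) is left unchanged by these substitutions. Terms 1 and 3 have the shape $\trace(\mat M^{-T}\eta_{\mat G}^T\mat A\xi_{\mat G}\mat H^T\mat B\mat H\mat M^T)$ after substitution, and cyclicity of the trace immediately kills the $\mat M$-factors. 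The cross terms 2 and 4 rely on the identity $\Sym(\mat M\mat K\mat M^T)=\mat M\Sym(\mat K)\mat M^T$, which lets me pull the factor $\mat M^T$ (coming from $\xi_{\mat H}$) out of the inner bracket, combine it with the $\mat M^{-1}$ from $\mat G$ into an identity, and cancel the remaining $\mat M^T$ against the $\mat M^{-T}$ carried by $\eta_{\mat G}$ by another trace cyclic permutation. This shows that $g_x$ is invariant along each fibre $[(\mat G,\mat H)]$, hence defines a metric on $T_{[x]}\bigl(\mathbb{R}_*^{n\times r}\times\mathbb{R}_*^{m\times r}/\GL{r}\bigr)$ via (\ref{eq:metric_quotient}).

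For positive definiteness I would single out the canonical choice $\omega_1=\omega_3=1$, $\omega_2=\omega_4=0$, which keeps only the block-diagonal part of the Hessian of the Lagrangian (\ref{eq:lagrangian_lowrank}). The bilinear form then reduces to
\[
g_x(\xi_x,\xi_x)=\trace(\xi_{\mat G}^T\mat A\xi_{\mat G}\mat H^T\mat B\mat H)+\trace(\xi_{\mat H}^T\mat B\xi_{\mat H}\mat G^T\mat A\mat G),
\]
which is strictly positive for every nonzero $(\xi_{\mat G},\xi_{\mat H})$ because $\mat A\succ 0$, $\mat B\succ 0$, and the full column rank of $\mat G$, $\mat H$ forces $\mat H^T\mat B\mat H\succ 0$ and $\mat G^T\mat A\mat G\succ 0$. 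For intermediate choices of $(\omega_2,\omega_4)\in[0,1]$, a continuity argument—combined with the fact, granted by Theorem \ref{thm:connection}, that $\D^2\mathcal{L}(x^*)$ is positive semidefinite with kernel exactly $\mathcal{V}_{x^*}$ at a local minimum $x^*$—shows that the bilinear form remains positive definite on $T_x\mathcal{M}$ in an open neighborhood of $x^*$.

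The main obstacle I anticipate is the bookkeeping in the invariance check for the cross terms involving $\Sym(\cdot)$ and the constant matrix $\mat C$: one must simultaneously track the transformations of $\mat G$, $\mat H$, $\xi_{\mat G}$, $\xi_{\mat H}$, and the symmetrization so that all $\mat M$-factors line up for cyclic cancellation. Once the identity $\Sym(\mat M\mat K\mat M^T)=\mat M\Sym(\mat K)\mat M^T$ is in hand, each of the four verifications is short, but care is required not to mislay a transpose or a factor of $\mat M^{-T}$.
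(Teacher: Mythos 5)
Your proof is correct and follows the same two-step template as the paper's own argument for Proposition~\ref{prop:grassmann} and Proposition~\ref{prop:lowrank}: termwise invariance under the $\GL{r}$-action, reduced via \citep[Proposition~3.6.1]{absil08a} to checking the three simultaneous substitutions, followed by exhibiting a sub-family of weights that is positive definite. The difference lies in the second step, and your version is the stronger one. The paper exhibits the family $\omega_1=1$, $\omega_2=\omega_3=\omega_4=\omega\in[0,1)$ and asserts positive definiteness near the minimum; but at $\omega=0$ that form reduces to $\langle\zeta_{\mat G},\mat{A}\zeta_{\mat G}\mat{H}^T\mat{BH}\rangle$ alone, which vanishes on every direction of the form $(0,\zeta_{\mat H})$, so the paper's stated sub-family is actually degenerate at one endpoint (almost certainly an oversight for $\omega_1=\omega_3=1$, $\omega_2=\omega_4=\omega$, i.e., the block-diagonal-plus-weighted-remainder family (\ref{eq:block_metric_lowrank}) used in Section~\ref{sec:metric_tuning_lowrank}). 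Your choice $\omega_1=\omega_3=1$, $\omega_2=\omega_4=0$ is exactly that corrected family at $\omega=0$, and your observation that it is positive definite \emph{globally} on $\mathbb{R}_*^{n\times r}\times\mathbb{R}_*^{m\times r}$ --- because $\mat{A},\mat{B}\succ0$ and $\mat G,\mat H$ have full column rank --- is a genuine improvement over the purely local claim. For the intermediate weights your continuity argument can be made sharp and loses nothing: writing the form as the convex combination $(1-\omega)D+\omega\,\D^2\mathcal{L}(x^*)$ of the block-diagonal part $D\succ0$ and the full Hessian, which is positive semidefinite at a local minimum of an unconstrained smooth problem, gives $g_{x^*}(\zeta,\zeta)\geq(1-\omega)\,\langle\zeta,D\zeta\rangle>0$ for all $\omega\in[0,1)$, and this persists on a neighborhood of $x^*$ by continuity. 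The one caveat is your closing claim for $(\omega_2,\omega_4)$ up to and including $1$: at $\omega_2=\omega_4=1$ the form is the full Hessian of the Lagrangian, which Theorem~\ref{thm:connection}(i) shows vanishes along $\mathcal{V}_{x^*}$, so that endpoint must be excluded --- consistent with the paper's own remark elsewhere that $\omega=1$ is discarded to prevent singularity of the metric candidate.
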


\begin{proof}
First, we show that the metrics from (\ref{eq:lowrank_metric_Hessian}) respect the condition in (\ref{eq:metric_quotient}). In order to show that the metric does not change along the equivalence class $[x] = [(\mat{G}, \mat{H})] = \{ (\mat{GM}^{-1}, \mat{HM}^T):  \mat M \in \GL{r} \}$ for all $\mat{M} \in \GL{r} $, it is equivalent, but simplified following \citep[Proposition~3.6.1]{absil08a}, to show that the metric for tangent vectors ${\xi}_{  x}, \eta_x \in T_x \mathcal{M}$ does not change under the transformations $(\mat{G}, \mat{H}) \mapsto (\mat{GM}^{-1}, \mat{HM}^T)$, $(\eta_{\mat G}, \eta_{\mat H}) \mapsto (\eta_{\mat G} \mat{M}^{-1}, \eta_{\mat H} \mat{M}^T)$, and $(\xi_{\mat G}, \xi_{\mat H}) \mapsto (\xi_{\mat G} \mat{M}^{-1}, \xi_{\mat H} \mat{M}^T)$. A few extra computations show that indeed the metrics from (\ref{eq:lowrank_metric_Hessian}) respect the condition in (\ref{eq:metric_quotient}).

Second, we show the construction of one particular family of Riemannian metrics. To this end, consider the case in (\ref{eq:lowrank_metric_Hessian}) where $\omega_1 =1$ and $\omega_2 = \omega_3 =\omega_4 = \omega$. Restricting $\omega \in [0, 1)$ guarantees that in the neighborhood of a minimum $g_x(\zeta_x , \zeta_x) > 0$ for all $\zeta_x \in T_x \mathcal{M}$, satisfying the criterion of positive definiteness on the tangent space. 

The proofs for symmetry compatibility and positive definiteness on the tangent space \changeBMM{conclude} the proof of the proposition.
\end{proof}

%

\begin{table}[H]
\begin{center} \small
\begin{tabular}{ p{5cm} | p{9cm} }
&  
$
\begin{array}[t]{lll}
\min\limits_{\substack{\mat{G} \in \mathbb{R}_*^{n \times r} \\ \mat{H} \in \mathbb{R}_*^{m \times r}}}   \trace(\mat{H} \mat{G}^T\mat{AGH}^T\mat{B} )/2+ \trace (\mat{H} \mat{G}^T \mat{C}) \\
\end{array}
$
\\
\hline
& \\
$\begin{array}[t]{lll} {\rm Matrix\ representation}\\
{\rm of\ an\ element\ } x \in \mathcal{M} \end{array}$& $\begin{array}[t]{lll} {x} = (\mat{G}, \mat{H})\end{array}$\\
& \\
$\begin{array}[t]{lll} {\rm  Computational\  space\  }  {\mathcal M} \end{array}$&
$
\begin{array}[t]{ll}
\mathbb{R}_*^{n \times r} \times \mathbb{R}_*^{m\times r}
\end{array}
$
\\
& \\
$\begin{array}[t]{lll} {\rm Group\ action }\end{array}$ &
$\begin{array}[t]{lll}(\mat{GM}^{-1}, \mat{HM}^T) \end{array},\ \forall\mat{M} \in  \GL{r}$ 
\\
& \\
$\begin{array}[t]{lll} {\rm Quotient\ space\ }  \end{array}$ &  
$
\begin{array}[t]{lll}
\mathbb{R}_*^{n \times r} \times \mathbb{R}_*^{m\times r}/  \GL{r} 
\end{array}
$
\\
& \\

$
\begin{array}[t]{ll}
{\rm Tangent\ vectors\ in\ } 
T_{  x} {\mathcal M}
\end{array}
$ & $
\begin{array}[t]{lll}
 {\xi}_{  x}  = (\xi_{\mat G}, \xi_{\mat H}) \in  
\mathbb{R}^{n \times r} \times \mathbb{R}^{m\times r} 
\end{array}
$
 \\
 & \\
$
\begin{array}[t]{lll}
{\rm Metric \ } {g}_{  x} ( {\xi}_{  x},   {\zeta}_{  x}) \\
{\rm for  \ }  {\xi}_{  x},  {\zeta}_{  x} \in T_{  x}  {\mathcal M}
\end{array}
$
& 
$
\begin{array}[t]{lll}
 {g}_{  x} ( {\xi}_{  x},  {\eta}_{  x} )=  \omega_1 \langle   {\eta}_{\mat G},   \mat{A}\xi_{\mat G} \mat{H}^T\mat{BH}\rangle \\
 \quad \quad \quad \quad \quad  + \omega_2\langle {\eta}_{\mat G},   2\mat{AG}\Sym({\mat H}^T \mat{B} \xi_{\mat H}) + \mat{C}\xi_{\mat H} \rangle  \\
 \quad \quad \quad \quad \quad + \omega_3\langle  {\eta}_{\mat H},\mat{B}\xi_{\mat H} \mat{G}^T\mat{AG} \rangle\\
 \quad \quad \quad \quad \quad + \omega_4 \langle \eta_{\mat H}, 2\mat{BH}\Sym({\mat G}^T \mat{A} \xi_{\mat G}) + \mat{C}^T\xi_{\mat G} \rangle, \\ 
 \\
{\rm or\ the\ metrics\ proposed\ in\ Section\ } \ref{sec:metric_tuning_lowrank}
 
\end{array}
$\\
& \\
$
\begin{array}[t]{lll}
{\rm Cost\ function }\\
\end{array}
$

&
$
\begin{array}[t]{ll}
 {f}(  x) = \trace(\mat{H} \mat{G}^T\mat{AGH}^T\mat{B} )/2+ \trace (\mat{H} \mat{G}^T \mat{C})
\end{array}
$
\\

& \\
$
\begin{array}[t]{lll}
{\rm First}\mbox{-}{\rm order\ derivative\ of }\\
 {f}(  x)
\end{array}
$

&
$
\begin{array}[t]{ll}
 {f}_x(  x) = (\mat{SH}, \mat{S}^T \mat{G}),\\
{\rm where\ } \mat{S} = \mat{A}\mat{GH}^T \mat{B} + \mat{C}
\end{array}
$
\\
& \\
$
\begin{array}[t]{lll}
{\rm Search\ direction\ }
\end{array}
$

&
$
\begin{array}[t]{ll}
\argmin\limits_{ {\zeta}_{  x} \in T_{  x} {\mathcal M}} \quad {f}(  x) + \langle  {f}_x(  x),  {\zeta}_{  x}\rangle +\frac{1}{2}  {g}_{  x}( {\zeta}_{  x},  {\zeta}_{  x})\\
\end{array}
$
\\
& \\

$
\begin{array}[t]{lll}
{\rm Retraction\ }  {R}_{  x}( {\xi}_{  x}){\rm \ that } \\

{\rm maps\ a\ search\ direction\ } {  \xi}_{  x}\\
{\rm onto\ }  {\mathcal M}
\end{array}
$
&
$
\begin{array}[t]{lll}
(\mat{G} +  {\xi}_{ \mat G}, \mat{H} + \xi_{\mat H})
\end{array}
$
\\
 & \\
 \hline
\end{tabular}
\end{center} 
\caption{Optimization-related ingredients for the problem (\ref{eq:lowrank_optimization_problem}). The numerical complexity per iteration of the Riemannian steepest-descent algorithm depends on solving for $\zeta_x$ for the search direction computation. For example, sparsity in matrices $\mat{A}$ and $\mat{B}$ considerably reduces the computation cost. The retraction mapping is the \changeBMM{Cartesian} product of the standard
retraction mapping on the manifold $\mathbb{R}_*^{n \times r}$ \citep[Example~3.6.4]{absil08a}. Few choices of the regularizing parameters $\omega_1, \omega_2, \omega_3, \omega_4 \in [0, 1]$ for relevant situations are discussed in Section \ref{sec:metric_tuning_lowrank}.}
\label{tab:lowrank} 
\end{table}

Matrix characterizations of various optimization-related ingredients are summarized in Table \ref{tab:lowrank}. The retraction operator is the standard generalization of the retraction operator on the manifold $\mathbb{R}_*^{n\times r}$ defined by \citet[Example~3.6.4]{absil08a}. 

It should be noted that numerical performance of algorithms \changeBMM{depends} on computing the Riemannian gradient efficiently with the metric (\ref{eq:lowrank_metric_Hessian}). This may become a numerically cumbersome task due to a number of coupled terms that are involved in (\ref{eq:lowrank_metric_Hessian}). However, below we show that the problem structure can be further exploited to decompose the metric (\ref{eq:lowrank_metric_Hessian}) into a locally dominating part with a simpler metric structure and a \emph{weighted} remainder. The dominant approximation may be preferred in a number of situations.

\subsection{Metric tuning and shift policies} \label{sec:metric_tuning_lowrank}
It should be emphasized that the cost function in (\ref{eq:lowrank_optimization_problem}) is \emph{convex and quadratic} in $\mat X$. Consequently, the cost function is also convex and quadratic in the arguments $(\mat{G}, \mat{H})$ \emph{individually}. As a consequence, the block diagonal elements of the second-order derivative $\mathcal{L}_{xx}(x)$ of the Lagrangian (\ref{eq:lagrangian_lowrank}) are strictly positive definite. This enables us to construct a family of Riemannian metrics with \emph{shifts} of the form
\begin{equation}\label{eq:block_metric_lowrank}
\begin{array}{lll}
 {g}_{  x} ( {\xi}_{  x},  {\eta}_{  x} )= & \omega \langle   {\eta}_{\mat G}, 2\mat{AG}\Sym({\mat H}^T \mat{B} \xi_{\mat H}) + \mat{C}\xi_{\mat H} \rangle  \\
 
&+ \omega \langle  {\eta}_{\mat H},2\mat{BH}\Sym({\mat G}^T \mat{A} \xi_{\mat G}) + \mat{C}^T\xi_{\mat G} \rangle  \\ 
& + \underbrace{\langle   {\eta}_{\mat G},   \mat{A}\xi_{\mat G} \mat{H}^T\mat{BH}\rangle  + \langle  {\eta}_{\mat H},\mat{B}\xi_{\mat H} \mat{G}^T\mat{AG}\rangle}_{{\rm from\ block\ diagonal\ approximation\ of\ }  \mathcal{L}_{xx}(x) }  \\ 
\end{array}
\end{equation}
in the neighborhood of a minimum, where ${x} = (\mat{G}, \mat{H}) \in  \mathbb{R}_*^{n \times r} \times \mathbb{R}_*^{m \times r} $, ${\xi}_{  x}, \eta_x$ are tangent vectors in $\mathbb{R}^{n\times r}\times \mathbb{R}^{m \times r}$, and $\omega \in [0, 1)$. \change{The form in (\ref{eq:block_metric_lowrank}) is derived from (\ref{eq:lowrank_metric_Hessian}) by choosing $\omega_1 = \omega_3 = 1$ and $\omega_2 = \omega_4 = \omega$. It should be noted that the case $\omega = 1$ is discarded to prevent singularity of the metric candidate. }

Away from the neighborhood, the metric (\ref{eq:block_metric_lowrank}) with $\omega = 0$ becomes a good metric candidate as $\mat{H}^T\mat{BH}$ and $\mat{G}^T \mat{AG}$ are positive definite for all $(\mat{G}, \mat{H}) \in \mathbb{R}_*^{n \times r}\times \mathbb{R}_*^{m \times r}$. The other benefit of $\omega$ being $0$ is that the resulting metric has a \emph{simpler matrix characterization}, and hence it may be preferred in numerically demanding instances.

\subsection{Symmetric positive definite matrices}
A popular subset of fixed-rank matrices is the set of symmetric positive semidefinite matrices \citep{burer03a, journee10a, meyer11c, vandereycken10a}. The set $\PSD{r}{n}$, the set of rank-$r$ symmetric positive semidefinite matrices of size $n\times n$, is equivalent to the set $\mathbb{R}_r^{n\times m}$ with symmetry imposed on the rows and columns, and therefore, it admits a number of factorizations similar to those in Figure \ref{fig:factorizations}. Consequently, the low-rank parameterization discussed earlier, in the context of the general case, has the counterpart $\mat{X} = \mat{YY}^T$, where $\mat{X} \in \PSD{r}{n}$ and $\mat{Y} \in \mathbb{R}_*^{n\times r}$ (full column rank matrices of size $n\times r$). This parameterization is not unique as $\mat{X} \in \PSD{r}{n} = \mat{YY}^T$ remains unchanged under the transformation $\mat{Y} \mapsto \mat{YO}$ for all $O \in \OG{r}$, where $\OG{r}$ is set of orthogonal matrices of size $r\times r$ such that $\mat{OO}^T = \mat{O}^T\mat{O} = \mat{I}$. The resulting search space is, thus, the set of equivalence classes $[\mat{Y}] = \{\mat{YO}: \mat{O} \in \OG{r} \}$ and is the quotient manifold $\mathbb{R}_*^{n \times r}/\OG{r}$ \citep{journee10a}. 

The following proposition summarizes the discussion on Riemannian metrics for the case of symmetric positive semidefinite matrices.

\begin{proposition}\label{prop:psd}
Consider the optimization problem
\begin{equation}\label{eq:psd_optimization_problem}
\begin{array}{lll}
\min\limits_{\mat{X} \in \mathbb{R}^{n\times n}} &  \frac{1}{2} \trace(\mat{X} \mat{A} \mat{X} \mat{B}) + \trace(\mat{X} \mat{C}) \\
\subject & \mat{X} \in \PSD{r}{n},
\end{array}
\end{equation}
where $\mat{A}, \mat{B} \succ 0 $ of size $n\times n$ and  $\mat{C} \in \mathbb{R}^{n \times n}$ is a symmetric matrix. Consider also the factorization  $\mat{X} = \mat{YY}^T$ of rank-$r$ symmetric positive semidefinite matrices to encode the rank constraint, where $\mat{Y} \in \mathbb{R}_*^{n \times r}$ (full column rank matrices).

\changeBMM{There exists a family of Riemannian metrics}
\begin{equation}\label{eq:block_metric_psd}
\begin{array}{lll}
 {g}_{  x} ( {\xi}_{  x},  {\eta}_{  x} )= &  \omega \langle   {\eta}_x,    2\mat{AY}\Sym({\mat Y}^T \mat{B} \xi_{x}) + 2\mat{BY}\Sym({\mat Y}^T \mat{A} \xi_{x}) + 2\mat{C}\xi_{x} \rangle \\
  & +  \underbrace{\langle \eta_x, \mat{A}\xi_{x} \mat{Y}^T\mat{BY}  + \mat{B}\xi_{x} \mat{Y}^T\mat{AY}   \rangle,}_{{\rm  Dominant\ positive\ definite\ approximation\ of\ } \mathcal{L}_{xx}(x) } \\ 

\end{array}
\end{equation}
\changeBMM{on $\mathbb{R}_*^{n \times r}$ with $\omega \in [0, 1)$, each of which induces a Riemannian metric on $\mathbb{R}_*^{n \times r}/\OG{r}$ in the neighborhood of the minimum of the problem (\ref{eq:psd_optimization_problem}).} Here ${x} = \mat{Y} \in  \mathbb{R}_*^{n \times r} $, ${\xi}_{  x}, \eta_x$ are vectors in the tangent space $\mathbb{R}^{n\times r}$, and $\mathcal{L}_{xx}(x)$ is the second-order derivative of the Lagrangian. Beyond the neighborhood, the metric (\ref{eq:block_metric_psd}) with $\omega = 0$ becomes a good metric candidate as $\mat{Y}^T\mat{BY}$ and $\mat{Y}^T \mat{AY}$ are positive definite for all $\mat{Y} \in \mathbb{R}_*^{n \times r}$.
\end{proposition}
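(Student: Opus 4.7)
The proof follows the two-step template of Propositions \ref{prop:grassmann} and \ref{prop:lowrank}: first, verify invariance of the candidate metric (\ref{eq:block_metric_psd}) along the equivalence class $[\mat{Y}] = \{\mat{YO}: \mat{O} \in \OG{r}\}$ so that it descends to the quotient $\mathbb{R}_*^{n \times r}/\OG{r}$; second, verify positive definiteness on the tangent space $T_x \mathcal{M} = \mathbb{R}^{n\times r}$ in the appropriate regime of $\omega$.

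For invariance, by the simplification of \citep[Proposition~3.6.1]{absil08a} used in the previous two propositions, it suffices to check that the right-hand side of (\ref{eq:block_metric_psd}) is unchanged under the simultaneous transformations $\mat{Y} \mapsto \mat{YO}$, $\xi_x \mapsto \xi_x \mat{O}$, and $\eta_x \mapsto \eta_x \mat{O}$ for every $\mat{O} \in \OG{r}$. This reduces to three elementary matrix identities: the orthogonality relation $\mat{OO}^T = \mat{O}^T \mat{O} = \mat{I}$, the cyclic property of the trace, and $\Sym(\mat{O}^T \mat{M} \mat{O}) = \mat{O}^T \Sym(\mat{M}) \mat{O}$. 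Applying these to each of the four matrix products appearing in (\ref{eq:block_metric_psd}) is mechanical. This step parallels the $(\mat G, \mat H) \mapsto (\mat{GM}^{-1}, \mat{HM}^T)$ computation in Proposition \ref{prop:lowrank}, and is in fact simpler because the symmetry group is orthogonal rather than general linear.

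For positive definiteness, I would begin by isolating the dominant block. Setting $\eta_x = \xi_x$ in the underbraced term of (\ref{eq:block_metric_psd}) yields
\[
\trace(\xi_x^T \mat{A} \xi_x \, \mat{Y}^T \mat{B} \mat{Y}) + \trace(\xi_x^T \mat{B} \xi_x \, \mat{Y}^T \mat{A} \mat{Y}).
\]
Since $\mat{A} \succ 0$, $\mat{B} \succ 0$, and $\mat{Y}$ has full column rank (so $\mat{Y}^T \mat{B}\mat{Y} \succ 0$ and $\mat{Y}^T \mat{A}\mat{Y} \succ 0$), both summands are strictly positive whenever $\xi_x \neq 0$. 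Hence the dominant block is a valid inner product on $T_x \mathcal{M}$ for every $\mat Y \in \mathbb{R}_*^{n \times r}$, which establishes the last sentence of the proposition and, in particular, validates the choice $\omega = 0$ globally.

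For $\omega \in (0, 1)$, the $\omega$-weighted term in (\ref{eq:block_metric_psd}) corresponds to the off-diagonal part of $\D^2\mathcal{L}(x)$ restricted to $T_x \mathcal{M}$ together with the $\mat C$ contribution; it carries no a priori sign control. However, by Theorem \ref{thm:connection} applied to the equivalence relation $\mat Y \sim \mat{YO}$, the full Hessian of the Lagrangian restricted to the horizontal space at a local minimum $x^\ast$ coincides with the Hessian of the cost on the quotient, which is positive semidefinite at a minimum of a convex cost pulled back through $\mat X = \mat{YY}^T$. Combined with the strict positive definiteness of the dominant block, a continuity argument then gives positive definiteness of the full convex combination on $T_x \mathcal M$ for every $\omega \in [0,1)$ in a neighborhood of $x^\ast$. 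The exclusion of $\omega = 1$ mirrors the treatment in Sections \ref{sec:convexity} and \ref{sec:metric_tuning_lowrank}: at $\omega = 1$ the metric degenerates along the vertical space.

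The main obstacle I anticipate is the positive definiteness argument for $\omega \in (0,1)$: unlike the dominant block, the $\omega$-term contains the unsigned $\mat C$ contribution, so the conclusion is intrinsically local and relies on the minimum condition. The invariance check and the global $\omega = 0$ case are routine matrix algebra.
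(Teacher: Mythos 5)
Your proposal is correct and follows essentially the same route as the paper, whose proof of this proposition is a one-line deferral to the discussion in Section \ref{sec:metric_tuning_lowrank} and to the two-step template (invariance under the group action, then positive definiteness of the block-diagonal dominant part globally and of the full combination locally near a minimum) already used for Propositions \ref{prop:grassmann} and \ref{prop:lowrank}. You have in fact supplied more detail than the paper does, and the details check out: the invariance reduces to the trace and $\Sym(\mat{O}^T\mat{M}\mat{O}) = \mat{O}^T\Sym(\mat{M})\mat{O}$ identities, the dominant block is positive definite for all full-rank $\mat{Y}$, and the convex-combination/continuity argument handles $\omega\in(0,1)$ near the minimum.
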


\begin{proof}
The proof follows from the discussion in Section \ref{sec:metric_tuning_lowrank}.
\end{proof}

\subsection{A numerical illustration}
We showcase the Riemannian preconditioning approach for computing low-rank solutions to the generalized Lyapunov equation of the form
\begin{equation}\label{eq:lowrank_Lyapunov}
\mat{AXB} + \mat{BXA} = \mat{C},
\end{equation}
where $\mat{A}, \mat{B} \succ 0$, and $\mat C$ is a low-rank symmetric positive semidefinite matrix. Matrices have appropriate dimensions. $\mat A$ is referred to as the \emph{system matrix} and $\mat B$ is referred to as the \emph{mass matrix}. The solution to (\ref{eq:lowrank_Lyapunov}) is expected to be low rank and symmetric positive semidefinite \citep{benner13a, li04a, vandereycken10a}. 

To compute low-rank solutions to (\ref{eq:lowrank_Lyapunov}), we minimize the \emph{energy norm} \change{$\trace(\mat{XAXB})\allowbreak - \trace(\mat{XC})$} over $\PSD{r}{n}$ \citep{vandereycken10a}. Proposition \ref{prop:psd} allows to characterize a family of metrics in (\ref{eq:block_metric_psd}) for solving the generalized Lyapunov equation. In contrast, an alternative is to consider the Euclidean metric, i.e.,
\begin{equation}\label{eq:lowrank_Lyapunov_metric_standard}
\begin{array}{lll}
 {g}_{  x} ( {\xi}_{  x},  {\zeta}_{  x} )=  \trace(  {\zeta}_{  x}^T   {\xi}_{  x} ) ,  \\ 
\end{array}
\end{equation}
where $ {x} = \mat{Y}$ and $ {\xi}_{  x}$ and $ {\zeta}_{  x}$ are tangent vectors. This is, for example, the Riemannian metric proposed by \citet{journee10a}. It is invariant to the group action $\mat{Y} \mapsto \mat{YO}$ for all $\mat{O} \in \OG{r}$. Although the alternative choice (\ref{eq:lowrank_Lyapunov_metric_standard}) is appealing for its numerical simplicity, the following test case clearly illustrates the benefits of the Riemannian preconditioning approach. 

\begin{figure*}[t]
\center
\includegraphics[scale = 0.28]{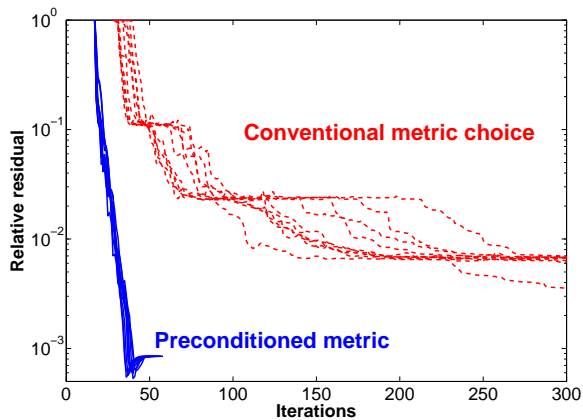}
\caption{The generalized low-rank Lyapunov equation problem (\ref{eq:lowrank_Lyapunov}). The test case is the benchmark problem from \citep[Example~2.1]{penzl99a} with $n=500$. The proposed Riemannian preconditioning approach with the metric (\ref{eq:block_metric_psd}) and $\omega = 0$ drastically improves the performance over the algorithm based on the Euclidean metric (\ref{eq:lowrank_Lyapunov_metric_standard}). Additionally, the choice $\omega =0$ leads to a simpler metric structure that can be exploited in a large-scale setup. \change{Here we show the convergence of the relative residual $\|\mat{AXB} + \mat{BXA} - \mat{C} \|_{\Fro} / \|{\mat C} \|_{\Fro}$ (different from the cost function $\trace(\mat{XAXB}) - \trace(\mat{XC})$) that is often used as a measure of recovery.}}
\label{fig:lowrank_lyapunov}
\end{figure*}

We consider the standard benchmark problem from \citep[Example~2.1]{penzl99a} that corresponds to discretization of a one-dimensional heat equation from heat flow in a thin rod. For this example, $\mat{A}$ is a tridiagonal matrix of size $500\times 500$. The main diagonal of $\mat A$ has all the elements equal to $2$. In addition, the first diagonals below and above the main diagonal of $\mat A$ have all the entries equal to $-1$. $\mat A$ is an ill-conditioned matrix with condition number $10^5$. The mass matrix $\mat{B}$ is an identity matrix of size $500\times 500$. The matrix $\mat C$ is a rank one matrix of the form $ee^T$, where $e^T$ is a row vector of length $500$ of the form $[0\ 0\ \ldots \ 0\ 1]$. We seek to find a rank-$5$ \change{matrix} that best solves the generalized Lyapunov equation (\ref{eq:lowrank_Lyapunov}). Both the algorithms are stopped when either the norm of the gradient is below $10^{-8}$ or when they complete $500$ iterations. The plots in Figure \ref{fig:lowrank_lyapunov} show the progress of relative residual $\|\mat{AXB} + \mat{BXA} - \mat{C} \|_{\Fro} / \|{\mat C} \|_{\Fro}$ with iterations over $10$ random initializations, where $\mat{X} = \mat{YY}^T$. The Riemannian algorithm with the metric (\ref{eq:block_metric_psd}) and $\omega = 0$ convincingly outperforms the algorithm based on the Euclidean metric (\ref{eq:lowrank_Lyapunov_metric_standard}) in Figure \ref{fig:lowrank_lyapunov} for a number of runs, \changeBMM{where the initial $300$ iterations are shown for clarity.}

\section{Conclusion}\label{sec:conclusion}
This paper addresses the important issue of selecting a metric in the Riemannian optimization framework \changeBMM{on a quotient manifold}. We have shown that sequential quadratic programming provides an insight into selecting a family of Riemannian metrics that \change{takes into account} second-order information of the problem. Quadratic optimization with orthogonality or rank constraints provides a class of nonconvex problems for which the method is particularly insightful, thanks to local convexity of the cost and constraint when taken separately. In those instances, Riemannian preconditioning connects to a number of existing algorithms and provides a geometric interpretation of a number of ``shift'' policies in numerical linear algebra.

\section*{Acknowledgments}
We thank the editor and two anonymous reviewers for carefully checking the paper and providing a number of helpful remarks.

\bibliographystyle{spbasic}
\bibliography{arXiv_MS16_sqp_manifold}

\end{document}